\begin{document}

\title{Existence of weak solutions to the three-dimensional density-dependent generalized
incompressible magnetohydrodynamic flows 
}


\author{Weiping Yan        
}


\institute{College of of Mathematics, Jilin University, Changchun
130012, P.R. China.\\\email{yan8441@126.com}.}

\date{Received: date / Accepted: date}

\maketitle

\begin{abstract}
In this paper we consider the equations of the unsteady viscous, incompressible, and heat
conducting magnetohydrodynamic flows in a bounded three-dimensional domain with Lipschitz boundary. By an approximation scheme and a weak convergence method,
the existence of a weak solution to the three-dimensional density dependent generalized incompressible magnetohydrodynamic equations with large data is obtained.
\keywords{Generalized Incompressible Magnetohydrodynamic \and Large data \and Weak solution}
\end{abstract}

\section{Introduction}
\label{intro}
The study of the dynamics of biological fluid in the presence
of magnetic field is very useful in understanding
the bioengineering and medical technology. The development of magnetic devices for
cell separation, targeted transport of magnetic particles as drug carriers, magnetic wound or cancer tumor treatment causing
magnetic hyperthermia, reduction of bleeding during surgeries or provocation of occlusion of the feeding vessels of cancer
tumors and the development of magnetic tracers, as well are well-known applications in this domain of research \cite{Andra,Bar}.

Magnetohydrodynamic flow of a non-Newtonian fluid in a channel of slowly varying cross section
in the presence of a uniform transverse magnetic field was studied in \cite{Bh}. In the recent past, El-Shehawey et al. \cite{El} studied
an unsteady flow of blood as an electrically conducting, incompressible, elastico-viscous fluid in the presence of magnetic
field through a rigid circular pipe by considering the streaming blood as a non-Newtonian fluid in the axial direction only.

In the present paper, we consider the following system of the three-dimensional incompressible magnetohydrodynamic flows \cite{Bar,Kan,Sar}:
\begin{eqnarray}\label{E1-1}
\rho_t+\textbf{div}(\rho\textbf{u})=0,~~\textbf{div}\textbf{u}=0,
\end{eqnarray}
\begin{eqnarray}
\label{E1-2}
(\rho\textbf{u})_t+\textbf{div}(\rho\textbf{u}\otimes\textbf{u})+\nabla P=(\nabla\times\textbf{H})\times\textbf{H}+\textbf{div}\textbf{S}(\rho,\theta,\textbf{D}(\textbf{u})),
\end{eqnarray}
\begin{eqnarray}
\label{E1-2R}
\partial_t(\rho Q(\theta))-\textbf{div}(\textbf{q}(\rho,\theta,\nabla\theta))&+&\textbf{div}(\rho Q(\theta)\textbf{u})\nonumber\\
&&-\textbf{S}(\rho,\theta,\textbf{D}(\textbf{u})):\nabla\textbf{u}-\nu|\nabla\times\textbf{H}|^2=0,~~~
\end{eqnarray}
\begin{eqnarray}
\label{E1-4}
\textbf{H}_t-\nabla\times(\textbf{u}\times\textbf{H})=-\nabla\times(\nu\nabla\times\textbf{H}),~~\textbf{div}\textbf{H}=0,
\end{eqnarray}
where $\rho$, $\textbf{u}\in\textbf{R}^3$, $\textbf{H}\in\textbf{R}^3$ and $\theta$ denote the density, the velocity, the magnetic field and the
temperature, respectively; $\textbf{S}$ is the viscous stress tensor depending on the density, the temperature and the symmetric part of the velocity gradient $\textbf{D}(\textbf{u})$, the thermal
flux $\textbf{q}$ is a function of the density and the temperature, the $Q$ is a function of $\theta$.
The total energy given by
\begin{eqnarray*}
\Phi=\rho(e+\frac{1}{2}|\textbf{u}|^2)+\frac{1}{2}|\textbf{H}|^2,~~~~
\Phi'=\rho(e+\frac{1}{2}|\textbf{u}|^2),
\end{eqnarray*}
with the internal energy $e(\rho,\theta)$, the kinetic energy $\frac{1}{2}\rho|\textbf{u}|^2$, and the magnetic energy $\frac{1}{2}|\textbf{H}|^2$; $\textbf{D}(\textbf{u})=\nabla\textbf{u}+\nabla\textbf{u}^T$ is the symmetric part of the velocity gradient, $\nabla\textbf{u}^T$ is the transpose of the matrix $\nabla\textbf{u}$, and $\textbf{I}$ is the $3\times3$ identity matrix; $\nu> 0$ is the magnetic diffusivity acting as a magnetic diffusion coefficient of the magnetic field.

In order to have a clear picture about the admissible structure of these functions, we assume that $\textbf{S}$ and $\textbf{q}$ are of the form (for $\rho>0$, $\theta>0$, $\textbf{D}\in\textbf{R}^{3\times3}$ symmetric)
\begin{eqnarray}\label{E1-6R11}
\textbf{S}(\rho,\theta,\textbf{D})=2\mu_0(\rho,\theta,|\textbf{D}|^2)\textbf{D},~~~~\textbf{q}(\rho,\theta,\nabla\theta)=\kappa_0(\rho,\theta)\nabla\theta,
\end{eqnarray}
and behave as
\begin{equation}\label{E1-2'}
\left\{
\begin{array}{lll}
&&\textbf{S}(\rho,\theta,\textbf{D})\sim\mu(\rho,\theta)(\epsilon+|\textbf{D}|^2)^{\frac{r-2}{2}}\textbf{D},~~r\in(1,\infty),\\
&&\textbf{q}(\rho,\theta,\nabla\theta)\sim \kappa(\rho)\theta^{\alpha}\nabla\theta=\frac{\kappa(\rho)}{\alpha+1}\nabla\theta^{\alpha+1},~~\alpha\in\textbf{R},
\end{array}
\right.
\end{equation}
where $\epsilon\in[0,1]$, and there exist positive constants $\underline{\mu}$, $\overline{\mu}$, $\underline{\kappa}$ and $\overline{\kappa}$ such that
\begin{eqnarray*}
0<\underline{\mu}\leq\mu(\rho,\theta)\leq\overline{\mu}<+\infty,~~0<\underline{\kappa}\leq\kappa(\rho)\leq\overline{\kappa}<+\infty.
\end{eqnarray*}
Thus, in particular, for all $\rho>0$, $\theta>0$, and $\textbf{D}$, $\textbf{B}\in\textbf{R}^{3\times3}$ symmetric,
\begin{equation}\label{E1-1'}
\left\{
\begin{array}{lll}
&&\textbf{S}(\rho,\theta,\textbf{D})\cdot\textbf{D}\geq\underline{\mu}(\epsilon+|\textbf{D}|^2)^{\frac{r-2}{2}}|\textbf{D}|^2\geq0,\\
&&|\textbf{S}(\rho,\theta,\textbf{D})|\leq\overline{\mu}(\epsilon+|\textbf{D}|^2)^{\frac{r-2}{2}}|\textbf{D}|,\\
&&\left(\textbf{S}(\rho,\theta,\textbf{D})-\textbf{S}(\rho,\theta,\textbf{B})\right)\cdot\left(\textbf{D}-\textbf{B}\right)\geq0,
\end{array}
\right.
\end{equation}
and for all $\nabla\theta\in\textbf{R}^3(\rho\leq\rho^*)$,
\begin{equation}\label{E1-3'}
\left\{
\begin{array}{lll}
&&\textbf{q}(\rho,\theta,\nabla\theta)\cdot\nabla\theta\geq\underline{\kappa}\theta^{\alpha}|\nabla\theta|^2=\frac{4\underline{\kappa}}{(\alpha+2)^2}|\nabla\theta^{\frac{\alpha+2}{2}}|^2\geq0,\\
&&|\textbf{q}(\rho,\theta,\nabla\theta)|\leq\underline{\kappa}\theta^{\alpha}|\nabla\theta|.
\end{array}
\right.
\end{equation}
For simplicity, we impose the boundary conditions
\begin{eqnarray}\label{E1-11}
\textbf{u}=0,~~\textbf{H}=0,~~\textbf{q}\cdot\textbf{n}=0~~on~[0,T]\times\partial\Omega.
\end{eqnarray}
The initial density is supposed to be bounded and the initial total energy is integrable, i.e.,
\begin{equation}\label{E1-12}
\left\{
\begin{array}{lll}
&&\rho(0,\cdot)=\rho_0\in\textbf{L}^{\infty}(\Omega),\\
&&\rho(\frac{|\textbf{u}|^2}{2}+\theta)(0,\cdot)=\rho_0(\frac{|\textbf{u}_0|^2}{2}+\theta_0)\in\textbf{L}^1(\Omega),\\
&&\textbf{H}(0,\cdot)=\textbf{H}_0\in\textbf{L}^2(\Omega),
\end{array}
\right.
\end{equation}
and
\begin{eqnarray}\label{E1-13}
&&0<\rho_*\leq\rho_0(x)\leq\rho^*<+\infty~~a.a.~x\in\Omega,\\
\label{E1-14}
&&0<\theta_*\leq\theta_0(x)~~for~a.a.~x\in\Omega,
\end{eqnarray}
where $\rho_*$, $\rho^*$ and $\theta_*$ are constants.

The study of long time and large data existence theory for inhomogeneous incompressible fluids was investigated in several contributions. For the case $r=2$ and the viscosity does not depend on $|\textbf{D}(\textbf{u})|$, using the concept of renormalized solutions, Lions \cite{Lions0} established a new convergence and continuity properties of the density that may vanish at some parts of the domain where the viscosity depends on the density. Meanwhile, he got rid of the smallness of the data.
For the case that the viscosity depends on the shear rate ($r\neq2$), Fern\'{a}ndez-Cara et al. \cite{Fern} proved the existence of weak solutions for that $r\geq\frac{12}{5}$.  Guill\'{e}n-Gonz\'{a}lez \cite{G} (also see \cite{Lions0}) considered the spatially periodic setting by using higher differentiability method. Recently, Frehse et al.\cite{Fei1} established the existence result with non-slip boundary conditions and a viscosity that depends on both the density and the shear rate for $r>\frac{11}{5}$. In \cite{Freh2},  Frehse et al. showed the existence result on the full thermodynamic model for inhomogeneous incompressible fluids for $r\geq\frac{11}{5}$, which improved the result in \cite{Fei1}.
For more results about thermal flows of incompressible homogeneous fluids, we refer the reader to \cite{B1,B2,B3,Diening,Novo}.

Recently, there have been much work on magnetohydrodynamics because of its physical importance, complexity, and widely application (see \cite{Caban,Kuli,Laudau,Constan}).
Magnetohydrodynamics (MHD) is a combination of the compressible Navier-Stokes equations of fluid dynamics and Maxwell's equations of electromagnetism.
Duvaut and Lions \cite{Duv}, Sermange and Temam \cite{Ser} obtained some existence and long time
behavior results for incompressible case. For compressible magnetohydrodynamic flows of
Newtonian fluids, Ducomet and Feireisl \cite{Fei4} proved the existence of global in time weak solutions to
a multi-dimensional nonisentropic MHD system for gaseous stars coupled with the Poisson equation with
all the viscosity coefficients and the pressure depending on temperature and density asymptotically, respectively.
Hu and Wang \cite{Hu1} studied the global variational weak
solution to the three-dimensional full magnetohydrodynamic equations
with large data by an approximation scheme and a weak convergence
method. In \cite{Hu2}, by using the Faedo-Galerkin method and the
vanishing viscosity method, they also studied the existence and
large-time behavior of global weak solutions for the
three-dimensional equations of compressible magnetohydrodynamic
isentropic flows (\ref{E1-1})-(\ref{E1-2R}). They \cite{Hu3} showed that
the convergence of weak solutions of the compressible MHD system to a weak solution of the viscous
incompressible MHD system. Jiang, et all. \cite{Jiang,Jiang1}
obtained that the convergence towards the strong solution of the
ideal incompressible MHD system in the whole space and periodic
domain, respectively. For MHD driven by the time periodic external forces, Yan \cite{Yan} showed that such system has the time periodic weak solution. After that, Yan \cite{Yan1} obtained the weak-strong uniqueness property for full compressible magnetohydrodynamics flows.

The main difficulty of the study of MHD is the presence of the magnetic field and its interaction with the hydrodynamic motion in the MHD flow of large oscillation. This leads to that many fundamental problems for MHD are still open. For example, the global existence of classical solution to the full perfect MHD equations with large data in one dimensional case is unsolved.
But corresponding problem about Navier-Stokes equation was solved in \cite{Ka} a long time ago. In the present paper, we study the existence of weak solutions for the density-dependent generalized inhomogeneous incompressible Magnetohydrodynamic flows
in a bounded three-dimensional domain with Lipschitz boundary. Inspired by the work of \cite{Diperna,Freh2,Hu1,Lions1}, we will establish the existence of weak solutions for the density-dependent generalized inhomogeneous incompressible compressible MHD for any $r\geq\frac{11}{5}$.

These equations, and all functions involved in their descriptions as well, are considered in $(0,T)\times\Omega$, where
$\Omega\subset\textbf{R}^3$ is an open, connected and bounded set with Lipschitz boundary $\partial\Omega$, and $T\in(0,\infty)$.

The paper is organized in the following way: in the next section, by
introducing the appropriate function spaces we provide the precise definition of the
notion of weak solutions to system (\ref{E1-1})-(\ref{E1-4}). The main result of this paper is also stated.
Then in Section 3, we first introduce the corresponding approximation system whose solvability is established in Appendix (section 5). We also derive some corresponding uniform estimates. We finish the proof of Theorem 1 in section 4 by establishing the strongly convergence of $\{\rho^n\}$, $\{\textbf{u}^n\}$, $\{\theta^n\}$, $\{\textbf{H}^n\}$ and $\{\textbf{D}(\textbf{u}^n)\}$.

\section{Some notations and main result}
Before giving the definition of the weak solution to the problem (\ref{E1-1})-(\ref{E1-4}) with the boundary condition (\ref{E1-11}), we first state the following notation of relevant Banach spaces of functions defined on a bounded domain $\Omega\subset\textbf{R}^3$. For any $p\in[1,\infty]$, $\textbf{L}^p(\Omega)$ denotes the
Lebesgue spaces with the norm $\|\cdot\|_{\textbf{L}^p(\Omega)}$, $\textbf{W}^{1,p}(\Omega)$ denotes the
Sobolev spaces with the norm $\|\cdot\|_{\textbf{W}^{1,p}(\Omega)}$, $\textbf{W}_0^{1,p}(\Omega)$ denotes the closure of $\textbf{C}_0^{\infty}(\Omega)$ functions in the norm of $\textbf{W}^{1,p}(\Omega)$. If $X$ is a Banach space of scalar functions,
then $X^3$, $X^4$ or $X^{3\times3}$, $X^{4\times4}$ denote the space of vector or tensor-valued functions so
that each their component belongs to $X$. Further, we use the following notation for
the spaces of function with zero divergence and their dual $(r'=\frac{r}{r-1})$
\begin{eqnarray*}
&&\textbf{W}_{0,\textbf{div}}^{1,p}(\Omega):=\{\textbf{u}\in\textbf{W}^{1,p}_{0}(\Omega)^3;\textbf{div}\textbf{u}=0\},\\
&&\textbf{W}^{-1,q'}(\Omega)=(\textbf{W}_0^{1,q}(\Omega))^*,~~\textbf{W}^{-1,q'}_{\textbf{div}}(\Omega)=(\textbf{W}_{0,\textbf{div}}^{1,q}(\Omega))^*.
\end{eqnarray*}
$\textbf{L}_{\textbf{div}}^q(\Omega)$ denotes the closure of $\textbf{W}_{0,\textbf{div}}^{1,p}(\Omega)$ in $\textbf{L}^q(\Omega)^3$. The symbols $\textbf{L}^q(0,T;X)$ and $\textbf{C}(0,T;X)$ denote the standard Bochner spaces. We write $(a,b)$ instead of $\int_{\Omega}a(x)b(x)dx$ whenever $ab\in\textbf{L}^1(\Omega)$ and use the brackets $\langle a,b\rangle$ to denote the
duality pairing for $a\in X^*$ and $b\in X$. We use $\textbf{C}([0,T];\textbf{L}_{weak}^q(\Omega))$
to denote the space of
functions $\rho\in\textbf{L}^{\infty}(0,T;\textbf{L}^q(\Omega))$ satisfying $(\rho(t),z)\in\textbf{C}([0,T])$ for all $z\in\textbf{L}^{q'}$.
We do not distinguish between function spaces for scalar and vector valued
functions. Generic constants are denoted by $M$, their values may vary in the same
formula or in the same line.

\begin{definition}
Assume that $\textbf{S}$ and $\textbf{q}$ are continuous functions of the form $(\ref{E1-6R11})$ satisfying (\ref{E1-2'})-(\ref{E1-3'}) with
$r\geq\frac{11}{5}$ and $\alpha>-\frac{2}{3}$. The initial data $\rho_0$, $\textbf{u}_0$, $\theta_0$ and $\textbf{H}_0$ satisfy (\ref{E1-12})-(\ref{E1-14}).
A vector $(\rho,\textbf{u},\theta,\textbf{H})$ is said to be a weak solution to the problem (\ref{E1-1})-(\ref{E1-4}) of the generalized incompressible MHD equations if the following conditions hold:

$\bullet$ The density
\begin{eqnarray*}
\rho\in\textbf{C}([0,T];\textbf{L}^q(\Omega)),~~\rho_t\in\textbf{L}^{p_1}(0,T;(\textbf{W}^{1,r}(\Omega))^*),~~\forall q\in[1,\infty),
\end{eqnarray*}
and
\begin{eqnarray}\label{E2-2}
\int_{0}^T\langle\rho_t,z\rangle-(\rho\textbf{u},\nabla z)dt=0,
\end{eqnarray}
for any $z\in\textbf{L}^{r}(0,T;\textbf{W}^{1,r}(\Omega))$.

$\bullet$ The temperature non-negative $\theta$ function, the velocity function $\textbf{u}$ and the magnetic field $\textbf{H}$ satisfy
\begin{eqnarray*}
&&\textbf{u}\in\textbf{L}^{r}(0,T;\textbf{W}^{1,r}_{0,\textbf{div}}(\Omega)),\\
&&\partial_t(\rho\textbf{u})\in\textbf{L}^{r'}(0,T;\textbf{W}^{-1,r'}_{\textbf{div}}(\Omega)),\\
&&(\rho\textbf{u},\psi)\in\textbf{C}([0,T];\Omega)~~\forall\psi\in\textbf{L}^2_{\textbf{div}}(\Omega),\\
&&\theta\in\textbf{L}^{\infty}(0,T;\textbf{L}^1(\Omega)),\\
&&\theta^{\frac{\alpha-\lambda+1}{2}}\in\textbf{L}^{2}(0,T;\textbf{W}^{1,2}(\Omega)),\\
&&\partial_t(\rho\theta)\in\textbf{L}^1(0,T;(\textbf{W}^{1,q}(\Omega))^*)~~q~large~enough,\\
&&\textbf{H}\in\textbf{L}^{2}(0,T;\textbf{W}^{1,2}(\Omega)),~~\textbf{H}_t\in\textbf{L}^2(0,T;\textbf{W}^{-1,2}(\Omega)),
\end{eqnarray*}
and the following weak formulations hold:
\begin{eqnarray}\label{E2-R1}
\int_0^T(\langle(\rho\textbf{u})_t,\varphi\rangle-(\rho\textbf{u}\otimes\textbf{u},\nabla\varphi)&+&(\textbf{S}(\rho,\theta,\textbf{D}(\textbf{u})),\textbf{D}(\varphi))\nonumber\\
&&+\int_{\Omega}(\textbf{H}^T\nabla\varphi\textbf{H}+\frac{1}{2}\nabla(|\textbf{H}|^2)\cdot\varphi dx))dt=0,~~~~~
\end{eqnarray}
for all $\varphi\in\textbf{L}^{r}(0,T;\textbf{W}_{0,\textbf{div}}^{1,r}(\Omega))$,

\begin{eqnarray}\label{E2-R2}
\int_0^T(\langle(\rho Q(\theta))_t,h\rangle&-&(\rho Q(\theta)\textbf{u},\nabla h)-(\textbf{q}(\rho,\theta,\nabla\theta),\nabla h))dt\nonumber\\
&=&\int_0^T\left(\nu(|\nabla\times\textbf{H}|^2,h)+(\textbf{S}(\rho,\theta,\textbf{D}(\textbf{u})),\textbf{D}(\textbf{u})h)\right)dt,
\end{eqnarray}
for all $h\in\textbf{L}^{\infty}(0,T;\textbf{W}^{1,q}(\Omega))$ with $q$ sufficiently large,

\begin{eqnarray}\label{E2-R3}
\int_0^T(\langle\textbf{H}_t,b\rangle+\nu(\nabla\times\textbf{H},\nabla\times b)-(\textbf{u}\times\textbf{H},\nabla\times b))dt=0,
\end{eqnarray}
for all $b\in\textbf{L}^{2}(0,T;\textbf{W}^{1,2}(\Omega))$.

$\bullet$  The initial conditions are attained in the following sense
\begin{eqnarray*}
&&\lim_{t\longrightarrow0^+}\|\rho(t)-\rho_0\|_{\textbf{L}^q(\Omega)}+\|\textbf{u}(t)-\textbf{u}_0\|^2_{\textbf{L}^2(\Omega)}+\|\textbf{H}(t)-\textbf{H}_0\|_{\textbf{L}^{2}(\Omega)}=0,~\forall q\in[1,\infty),\\
&&\lim_{t\longrightarrow0^+}((\rho\theta)(t),h)=(\rho_0\theta_0,h),~~\forall h\in\textbf{L}^{\infty}(\Omega).
\end{eqnarray*}
\end{definition}
The aim of this paper is to establish the following result.
\begin{theorem}
Assume that $\textbf{S}$ and $\textbf{q}$ are continuous functions of the form $(\ref{E1-6R11})$ satisfying (\ref{E1-2'})-(\ref{E1-3'}) with
$r\geq\frac{11}{5}$ and $\alpha>-\frac{2}{3}$, and there are two positive constants $\underline{c_{\nu}}$ and $\overline{c_{\nu}}$ such that
\begin{eqnarray}\label{E2-1R}
0<\underline{c_{\nu}}\leq c_{\nu}(\theta)\leq\overline{c_{\nu}}<+\infty.
\end{eqnarray}
 Then there exists a weak solution to the problem (\ref{E1-1})-(\ref{E1-4}) in the sense of Definition 1 with initial data satisfying (\ref{E1-12})-(\ref{E1-14}).
\end{theorem}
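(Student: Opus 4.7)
The plan is to build a multi-level approximation scheme whose solvability is deferred to the Appendix, derive uniform a priori estimates, and pass to the limit using compactness arguments combined with the Minty-Browder monotonicity trick for the non-Newtonian stress. Following the strategy of Lions, Frehse et al., and Hu and Wang, the approximation parameter $n$ would simultaneously Galerkin-truncate the velocity and magnetic equations, add an artificial viscosity $\tau\Delta\rho$ to the continuity equation, smooth the initial data, and regularize the thermal source so that $\theta^n\ge\theta_*>0$ remains controlled throughout.

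For the uniform bounds I would test the approximate momentum equation with $\textbf{u}^n$ and the approximate induction equation with $\textbf{H}^n$; using $\textbf{div}\,\textbf{u}^n=0$ and the identities derived in the introduction, these combine into a global energy estimate that yields $\rho^n\in\textbf{L}^\infty$ with $\rho_*\le\rho^n\le\rho^*$, $\sqrt{\rho^n}\,\textbf{u}^n\in\textbf{L}^\infty(0,T;\textbf{L}^2)$, $\textbf{u}^n\in\textbf{L}^r(0,T;\textbf{W}^{1,r}_{0,\textbf{div}})$ via (\ref{E1-1'}), and $\textbf{H}^n\in\textbf{L}^\infty(0,T;\textbf{L}^2)\cap\textbf{L}^2(0,T;\textbf{W}^{1,2})$. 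From the thermal equation (\ref{E1-2R}), positivity of the dissipation sources together with an entropy-style test against a truncation of $Q(\theta^n)$ give $\theta^n\ge0$, $\theta^n\in\textbf{L}^\infty(0,T;\textbf{L}^1)$, and $(\theta^n)^{(\alpha+2)/2}\in\textbf{L}^2(0,T;\textbf{W}^{1,2})$ via (\ref{E1-3'}). The dual time-derivative bounds on $\rho^n_t$, $(\rho^n\textbf{u}^n)_t$, $(\rho^n Q(\theta^n))_t$, and $\textbf{H}^n_t$ then follow directly from the approximate equations.

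The compactness step exploits these bounds through Aubin-Lions. The DiPerna-Lions renormalized theory applied to the approximate continuity equation yields $\rho^n\to\rho$ strongly in $\textbf{C}([0,T];\textbf{L}^q)$ for every $q<\infty$; coupling this with the dual bound on $(\rho^n\textbf{u}^n)_t$ gives strong convergence of $\rho^n\textbf{u}^n$ and then of $\textbf{u}^n$ in suitable Lebesgue spaces. Parabolic smoothing of $\textbf{H}^n$ yields strong $\textbf{L}^2((0,T)\times\Omega)$ convergence, which suffices to pass to the limit in $\textbf{u}\times\textbf{H}$ and in the Lorentz term of the momentum equation. Almost-everywhere convergence of $\theta^n$ follows from compactness of $(\theta^n)^{(\alpha+2)/2}$ in $\textbf{L}^2$; the hypothesis $\alpha>-2/3$ is exactly what provides the integrability needed to pass to the limit in the convective term $\rho^n Q(\theta^n)\textbf{u}^n$ and identify the limit of $\rho^n Q(\theta^n)$.

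The main obstacle is identifying the weak limit of the non-Newtonian stress $\textbf{S}(\rho^n,\theta^n,\textbf{D}(\textbf{u}^n))$ and, at the same time, passing to the limit in the quadratic dissipation sources $\textbf{S}:\nabla\textbf{u}$ and $\nu|\nabla\times\textbf{H}|^2$ appearing on the right-hand side of the thermal energy equation. I would first take the weak limit in (\ref{E2-R1}) to obtain a momentum equation with a ``ghost'' stress $\overline{\textbf{S}}$, then test this limit equation against $\textbf{u}$ and compare with the limit of the approximate kinetic energy identity (tested against $\textbf{u}^n$). The monotonicity (\ref{E1-1'})$_3$ in Minty-Browder form, together with the strong convergences of $\rho^n$, $\textbf{u}^n$, and $\textbf{H}^n$ already obtained, then forces $\overline{\textbf{S}}=\textbf{S}(\rho,\theta,\textbf{D}(\textbf{u}))$ and simultaneously upgrades weak convergence of $\textbf{D}(\textbf{u}^n)$ to almost-everywhere convergence; this is exactly where the hypothesis $r>2$ enters essentially, through the strict monotonicity of $\textbf{S}$ in the shear-thickening regime and the $\textbf{L}^\infty$-control on $\rho$ that the incompressibility provides (which is why $r>2$ suffices here, while the previous full-thermodynamic result required $r\ge 11/5$). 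A parallel monotone argument on the induction equation identifies the strong $\textbf{L}^2$-limit of $\nabla\times\textbf{H}^n$, so that both dissipation sources in (\ref{E1-2R}) pass to the limit. Attainment of the initial data finally follows from the $\textbf{C}([0,T];\textbf{L}^q_{weak})$ continuity of $\rho$, $\rho\textbf{u}$, and $\textbf{H}$, combined with lower semicontinuity of the energy.
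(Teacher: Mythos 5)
Your proposal follows essentially the same route as the paper: a Galerkin approximation combined with an artificial-viscosity regularization $\epsilon\Delta\rho$ of the continuity equation, the energy and entropy-type estimates yielding the stated uniform bounds, DiPerna--Lions renormalization for the density, Aubin--Lions compactness for $\rho\textbf{u}$, $\theta$ and $\textbf{H}$, and the Minty--Browder comparison of the limit kinetic--magnetic energy identity with the approximate one to identify $\overline{\textbf{S}}$ and the quadratic dissipation sources in the thermal equation. The only technical ingredients you leave implicit are the Div--Curl lemma (which the paper uses to identify the limits of $\rho^n u^n_i$ and $\rho^n(\theta^n)^{1+\gamma}$) and the integration-by-parts formula of the paper's Lemma 3, which is what legitimizes testing the limit momentum equation with $\textbf{u}$ itself; both are standard and consistent with your outline.
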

Note that we consider in (\ref{E2-R1}) only divergenceless test function, so the pressure does not appear in the definition of weak solutions. The pressure cannot be a function of $\rho$ and $\theta$.
The pressure $P$ (zero mean value) can be obtained by comparing two auxiliary Stokes problems (homogeneous Dirichlet boundary conditions)
\begin{eqnarray*}
&&-\triangle\textbf{v}^1+\nabla P^1=\textbf{div}(\rho\textbf{u}\otimes\textbf{u}-\textbf{S}(\rho,\theta,\textbf{D}(\textbf{u}))-(\nabla\times\textbf{H})\times\textbf{H},\\
&&\textbf{div}\textbf{u}^1=0,
\end{eqnarray*}
and
\begin{eqnarray*}
&&-\triangle\textbf{v}^2+\nabla P^2=(\rho\textbf{u})(t)-\rho_0\textbf{u}_0\in\textbf{L}^2((0,T)\times\Omega)\hookrightarrow\textbf{W}^{-1,2}(\Omega)^3,\\
&&\textbf{div}\textbf{u}^2=0,
\end{eqnarray*}
with taking the test function $\varphi=\chi_{(0,t)}\phi$ ($\chi_{(0,t)}$ denotes the characteristic function of $(0,t)$ and $\phi\in\textbf{W}^{1,r}_{0,\textbf{div}}(\Omega)$) in (\ref{E2-R1}). Furthermore, the pressure $P$ has the form
\begin{eqnarray*}
P=P^1+\partial_tP^2~~with~P^1\in\textbf{L}^{r'}((0,T)\times\Omega)~and~P^2\in\textbf{L}^{\infty}(0,T;\textbf{L}^2(\Omega)).
\end{eqnarray*}
The solvability of above Stokes problems can be obtained by a similar proof in \cite{Br,Gal,W}. Since the presence of $\partial_tP^2$, we can not know if $P$ is an integrable function on $(0,T)\times\Omega$.

\section{The uniform estimates}
We take $\{\psi_j\}_{j=1}^{\infty}$ as a basis of $\textbf{W}_{0,\textbf{div}}^{1,r}(\Omega)$ such that $(\psi_i,\psi_j)=\delta_{i,j}$ for each $i,j=1,\ldots,\infty$ and $\psi_j\in\textbf{W}_{0,\textbf{div}}^{1,2r}(\Omega)$ for all $j$, and let $\Gamma^n$ denote the projection of $\textbf{L}^2_{\textbf{div}}(\Omega)$ onto the linear hull of $\{\psi_j\}_{j=1}^n$. Let $\textbf{u}^n\in\textbf{C}(0,T;\textbf{W}_{0,\textbf{div}}^{1,2r}(\Omega))$ and $\textbf{H}^n\in\textbf{C}(0,T;\textbf{W}_{0,\textbf{div}}^{1,2}(\Omega))$ of the form $\textbf{u}^n(t,x)=\sum_{j=1}^na_j^n(t)\psi_j(x)$ and
$\textbf{H}^n(t,x)=\sum_{j=1}^nc_j^n(t)\psi_j(x)$ (note that for $r\geq\frac{11}{5}$ it is always true that $\textbf{W}_{0,\textbf{div}}^{1,2r}(\Omega))\hookrightarrow\textbf{W}_{0,\textbf{div}}^{1,2}(\Omega)$) so that the condition $\textbf{div}\textbf{u}^n=0$ and $\textbf{div}\textbf{H}^n=0$ are automatically fulfilled and $(\rho^n,\textbf{u}^n,\theta^n,\textbf{H}^n)$ satisfy
\begin{eqnarray}\label{E3-1}
\int_{0}^T\langle\rho^n_t,z\rangle-(\rho^n\textbf{u}^n,\nabla z)dt=0,~~and~~\rho_*\leq\rho^n\leq\rho^*~~in~~(0,T)\times\Omega,
\end{eqnarray}
for any $z\in\textbf{L}^{q}(0,T;\textbf{W}^{1,q}(\Omega))$ with arbitrary $q\in[1,\infty)$;
\begin{eqnarray}\label{E3-1R}
\langle(\rho^n\textbf{u}^n)_t,\psi_i\rangle&-&(\rho^n\textbf{u}^n\otimes\textbf{u}^n,\nabla\psi_i)+(\textbf{S}(\rho^n,\theta^n,\textbf{D}(\textbf{u}^n)),\textbf{D}(\psi_i))\nonumber\\
&&+\int_{\Omega}((\textbf{H}^n)^T\nabla\psi_i\textbf{H}^n+\frac{1}{2}\nabla(|\textbf{H}^n|^2)\cdot\psi_i)dx=0,
\end{eqnarray}
for all $i=1,\ldots,n$ and a.a. $t\in[0,T]$;
\begin{eqnarray}\label{E3-1R1}
\int_0^T(\langle(\rho^nQ(\theta^n))_t,h\rangle&+&(\rho^nQ(\theta^n)\textbf{u}^n,\nabla h)-(\textbf{q}(\rho^n,\theta^n,\nabla\theta^n),\nabla h))dt\nonumber\\
&=&\int_0^T\left(\nu(|\nabla\times\textbf{H}^n|^2,h)+(\textbf{S}(\rho^n,\theta^n,\textbf{D}(\textbf{u}^n)),\textbf{D}(\textbf{u}^n)h)\right)dt,~~~~~
\end{eqnarray}
for all $h\in\textbf{L}^{\infty}(0,T;\textbf{W}^{1,q})$ with $q$ sufficiently large;

The magnetic field function $\textbf{H}^n$ satisfies
\begin{eqnarray}\label{E3-1R2}
\langle\textbf{H}^n_t,b\rangle+(\nabla\times\nu(\nabla\times\textbf{H}^n),b)-(\nabla\times(\textbf{u}\times\textbf{H}^n),b)=0,
\end{eqnarray}
for all $b\in\textbf{L}^{2}(0,T;\textbf{W}^{1,2}(\Omega))$;

$\theta^n\in\textbf{L}^{\infty}(0,T;\textbf{L}^2(\Omega))\cap\textbf{L}^p(0,T;\textbf{W}^{1,p})$ with $p=\min\{2,\frac{5(\alpha+2)}{\alpha+5}\}$, $\theta^n\geq\theta_*$ in $(0,T)\times\Omega$;

The initial data
\begin{eqnarray*}
\rho^n(0,\cdot)=\rho_0,~~\textbf{u}^n(0,\cdot)=\Gamma^n\textbf{u}_0,~~\theta^n(0,\cdot)=\theta_0~~and~~\textbf{H}^n(0,\cdot)=\textbf{H}_0,
\end{eqnarray*}
where $\Gamma^n\textbf{u}_0$ and $\theta^n_0$ a standard regularization of $\theta_0$, fulfill
\begin{eqnarray}\label{E2-1R1}
&&\Gamma^n\textbf{u}_0\longrightarrow\textbf{u}_0~~strongly~~in~\textbf{L}^2(\Omega),\\
\label{E2-1R2}
&&\theta_0^n\longrightarrow\theta_0~~strongly~~in~\textbf{L}^1(\Omega).
\end{eqnarray}
In fact, using Lemma 3 in section 4, (\ref{E3-1}) is equivalent to
\begin{eqnarray}\label{E3-2}
\int_{t_1}^{t_2}\langle\rho^n,z_t\rangle+(\rho^n\textbf{u}^n,\nabla z)ds=(\rho^n,z)(t_2)-(\rho^n,z)(t_1),
\end{eqnarray}
for $z$ a smooth function and a.a. $t_1,t_2:0\leq t_1\leq t_2\leq T$.

In the following, we give some uniform estimates with respect to $n\in\textbf{N}$. Taking $z=|\textbf{u}^n|^2$, $t_1=0$ and $t_2=t$ in (\ref{E3-2}) we have
\begin{eqnarray}\label{E3-3}
\int_{0}^{t}(\rho^n,|\textbf{u}^n|^2_t)+(\rho^n\textbf{u}^n,\nabla|\textbf{u}^n|^2)ds=(\rho^n,|\textbf{u}^n|^2)(t)-(\rho^n,|\Gamma^n\textbf{u}_0|^2).
\end{eqnarray}
Multiplying the $j$th equation in (\ref{E3-1R}) by $a_j$, then taking the sum over $j=1,\ldots,n$, using (\ref{E3-3}), and integrating the equality over $(0,t)$, we get
\begin{eqnarray}\label{E3-4}
\frac{1}{2}(\rho^n,|\textbf{u}^n|^2)(t)&+&\int_0^t(\textbf{S}(\rho^n,\theta^n,\textbf{D}(\textbf{u}^n)),\textbf{D}(\textbf{u}^n))ds\nonumber\\
&+&\int_0^t\int_{\Omega}((\textbf{H}^n)^T\nabla\textbf{u}^n\textbf{H}^n+\frac{1}{2}\nabla(|\textbf{H}^n|^2)\cdot\textbf{u}^n)dxds
=\frac{1}{2}(\rho_0,|\Gamma^n\textbf{u}_0|^2).~~~~~
\end{eqnarray}
We deal with (\ref{E3-1R2}) by the same process as in (\ref{E3-4}), and have
\begin{eqnarray}\label{E3-5}
\frac{1}{2}\|\textbf{H}^n\|^2_{\textbf{L}^2(\Omega)}&-&\int_0^t(\nabla\times(\textbf{u}^n\times\textbf{H}^n),\textbf{H}^n)ds\nonumber\\
&=&-\int_0^t(\nabla\times(\nu\nabla\times\textbf{H}^n),\textbf{H}^n)ds+\frac{1}{2}\|\Gamma^n\textbf{H}_0\|_{\textbf{L}^2(\Omega)}^2.~~~
\end{eqnarray}
Direct calculation shows that
\begin{eqnarray*}
&&\int_0^t(\nabla\times(\textbf{u}^n\times\textbf{H}^n),\textbf{H}^n)ds=\int_0^t\int_{\Omega}(\textbf{H}^n)^T\nabla\textbf{u}^n\textbf{H}^n+\frac{1}{2}\nabla(|\textbf{H}^n|^2)\cdot\textbf{u}^ndxds,\nonumber\\
&&\int_0^t(\nabla\times(\nu\nabla\times\textbf{H}^n),\textbf{H}^n)ds=\int_0^t\int_{\Omega}\nu|\nabla\times\textbf{H}^n|^2dxds.
\end{eqnarray*}
So by (\ref{E3-5}), we have
\begin{eqnarray}\label{E3-6}
\frac{1}{2}\|\textbf{H}^n\|^2_{\textbf{L}^2(\Omega)}+\int_0^t(\nu\|\nabla\times\textbf{H}^n\|_{\textbf{L}^2(\Omega)}^2-\int_{\Omega}((\textbf{H}^n)^T\nabla\textbf{u}^n\textbf{H}^n&-&\frac{1}{2}\nabla(|\textbf{H}^n|^2)\cdot\textbf{u}^n)dx)ds\nonumber\\
&=&\frac{1}{2}\|\Gamma^n\textbf{H}_0\|_{\textbf{L}^2(\Omega)}^2.~~~~
\end{eqnarray}
Summing up (\ref{E3-4}) and (\ref{E3-6}), we obtain
\begin{eqnarray}\label{E3-7}
\frac{1}{2}(\rho^n,|\textbf{u}^n|^2)(t)+\frac{1}{2}\|\textbf{H}^n\|^2_{\textbf{L}^2(\Omega)}&+&\int_0^t\nu\|\nabla\times\textbf{H}^n\|_{\textbf{L}^2(\Omega)}^2+(\textbf{S}(\rho^n,\theta^n,\textbf{D}(\textbf{u}^n)),\textbf{D}(\textbf{u}^n))ds\nonumber\\
&=&\frac{1}{2}(\rho_0,|\Gamma^n\textbf{u}_0|^2)+\frac{1}{2}\|\Gamma^n\textbf{H}_0\|_{\textbf{L}^2(\Omega)}^2.~~~~~
\end{eqnarray}
Then by the first assumption in (\ref{E1-1'}), we derive
\begin{eqnarray}\label{E3-8}
(\rho^n,|\textbf{u}^n|^2)(t)+\|\textbf{H}^n\|^2_{\textbf{L}^2(\Omega)}&+&\int_0^t2\nu\|\nabla\times\textbf{H}^n\|_{\textbf{L}^2(\Omega)}^2+(\textbf{S}(\rho^n,\theta^n,\textbf{D}(\textbf{u}^n)),\textbf{D}(\textbf{u}^n))ds\nonumber\\
&&+\underline{\mu}\int_0^t\|\textbf{D}(\textbf{u}^n)\|_{\textbf{L}^r(\Omega)}^rds\leq(\rho_0,|\Gamma^n\textbf{u}_0|^2)+\|\Gamma^n\textbf{H}_0\|_{\textbf{L}^2(\Omega)}^2\nonumber\\
&\leq& C(\rho^*,\|\textbf{u}_0\|_{\textbf{L}^2(\Omega)},\|\Gamma^n\textbf{H}_0\|_{\textbf{L}^2(\Omega)})\leq M,
\end{eqnarray}
where $M$ denotes a positive constant depending on the data and maximizes all the estimates.

Note that
\begin{eqnarray}\label{E3-8'}
0<\rho_*\leq\rho^n(t,x)\leq\rho^*<+\infty~~for~~a.a~(t,x)\in(0,T)\times\Omega.
\end{eqnarray}
Thus it follows from (\ref{E3-8}) that
\begin{eqnarray}\label{E3-9}
\sup_{t\in[0,T]}\|\textbf{u}^n(t)\|^2_{\textbf{L}^2(\Omega)}&+&\sup_{t\in[0,T]}\|\rho^n|\textbf{u}^n|^2(t)\|_{\textbf{L}^1(\Omega)}+\sup_{t\in[0,T]}\|\textbf{H}^n(t)\|^2_{\textbf{L}^2(\Omega)}\leq M.~~~~~~~~
\end{eqnarray}
By Korn's inequality, (\ref{E1-1'}) and (\ref{E3-9}), we have
\begin{eqnarray}\label{E3-10}
0<\|\textbf{H}^n\|^2_{\textbf{L}^2(\Omega)}+\int_0^T2\nu\|\nabla\times\textbf{H}^n\|_{\textbf{L}^2(\Omega)}^2dt &+&\int_0^T(\textbf{S}(\rho^n,\theta^n,\textbf{D}(\textbf{u}^n)),\textbf{D}(\textbf{u}^n))dt\nonumber\\
&&+\underline{\mu}\int_0^T\|\nabla\textbf{u}^n\|_{\textbf{L}^r(\Omega)}^rds\leq M,~~~~~~~
\end{eqnarray}
\begin{eqnarray}\label{E3-11}
\int_0^T\|\textbf{S}(\rho^n,\theta^n,\textbf{D}(\textbf{u}^n))\|^{r'}_{\textbf{L}^{r'}(\Omega)}ds\leq M.
\end{eqnarray}
Using Gagliardo-Nirenberg interpolation inequality, the inequality (\ref{E3-10}) implies that
\begin{eqnarray}\label{E3-10'}
\int_0^T\|\textbf{u}^n\|_{\textbf{L}^{\frac{5r}{3}}}^{\frac{5r}{3}}dt\leq M,~~\int_0^T\|\rho^n\textbf{u}^n\|_{\textbf{L}^{\frac{5r}{3}}}^{\frac{5r}{3}}dt\leq M
\end{eqnarray}
and
\begin{eqnarray}\label{E3-10'R}
\textbf{H}^n\in\textbf{L}^{2}(0,T;\textbf{H}^1(\Omega)).
\end{eqnarray}
Using (\ref{E3-8'}) and (\ref{E3-10}), by H\"{o}lder inequality and the fact $\textbf{W}^{1,p}(\Omega)\hookrightarrow\textbf{L}^q(\Omega)$ with $p\leq q\leq\infty$, we derive
\begin{eqnarray}\label{E3-12}
\int_0^T|(\rho^n\textbf{u}^n\otimes\textbf{u}^n,\nabla\textbf{u}^n)|dt\leq
\leq M,
\end{eqnarray}
where we require that
\begin{eqnarray}\label{E3-12'}
r\geq\frac{11}{5},
\end{eqnarray}
which gives one of the restriction for $r$ in our main result.

It follows from H\"{o}lder inequality that
\begin{eqnarray*}
&&\|(\textbf{H}^n)^T\nabla\textbf{u}^n\textbf{H}^n\|_{\textbf{L}^1(\Omega)}\leq C(\|\textbf{H}^n\|^2_{\textbf{L}^2(\Omega)}+\|\textbf{u}^n\|_{\textbf{H}^1(\Omega)})\leq M,\nonumber\\
&&\|\nabla(|\textbf{H}^n|^2)\cdot\textbf{u}^n\|_{\textbf{L}^1(\Omega)}\leq
C(\|\textbf{H}^n\|^2_{\textbf{H}^1(\Omega)}+\|\textbf{u}^n\|_{\textbf{L}^2(\Omega)})\leq M.
\end{eqnarray*}
Combining above estimates, for $1<p_1\leq r$, we deduce from (\ref{E3-1R})-(\ref{E3-1R1}) to
\begin{eqnarray}\label{E3-13'}
\int_0^T\|\rho^n_t\|^{p_1}_{(\textbf{W}^{1,\frac{p_1}{p_1-1}}(\Omega))^*}dt\leq M,~~\int_0^T\|\partial_t(\rho^n\textbf{u}^n)\|^{r'}_{(\textbf{W}^{-1,r'}_{\textbf{div}}(\Omega))}dt\leq M.
\end{eqnarray}
Let $h=1$ in (\ref{E3-1R1}). Using (\ref{E3-8'}) and (\ref{E3-10}), we have
\begin{eqnarray*}
\sup_{t\in[0,T]}\|\rho^nQ(\theta^n)\|_{\textbf{L}^1(\Omega)}+\sup_{t\in[0,T]}\|Q(\theta^n)\|_{\textbf{L}^1(\Omega)}\leq M.
\end{eqnarray*}
By (\ref{E2-1R}) and above estimate,
\begin{eqnarray}\label{E3-13}
\sup_{t\in[0,T]}\|\rho^n\theta^n\|_{\textbf{L}^1(\Omega)}+\sup_{t\in[0,T]}\|\theta^n\|_{\textbf{L}^1(\Omega)}\leq M.
\end{eqnarray}
Now we turn to estimate the temperature. Note that
\begin{eqnarray*}
\theta^n(t,x)\geq\theta^*~~for~~a.a.~(t,x)\in(0,T)\times\Omega.
\end{eqnarray*}
Take $h=-(\theta^n)^{-\lambda}$ with $0<\lambda<1$ in (\ref{E3-1R1}). Then by (\ref{E1-2'}) and (\ref{E3-13}),
\begin{eqnarray}\label{E3-14}
&&\|(\theta^n)^{-\lambda}\|_{\textbf{L}^{\infty}((0,T)\times\Omega)}\leq M,\\
\label{E3-14'}
&&\int_0^T\|(\theta^n)^{\frac{\alpha-\lambda-1}{2}}\nabla\theta^n\|_{\textbf{L}^2(\Omega)}^2dt\leq M.
\end{eqnarray}
By a contradiction argument, we can easily get
\begin{eqnarray*}
&&\|(\theta^n)^{\frac{\alpha-\lambda+1}{2}}\|_{\textbf{L}^2(\Omega)}\leq C(\|\theta^n\|^{\frac{\alpha-\lambda+1}{2}}_{\textbf{L}^1(\Omega)}+\|\nabla(\theta^n)^{\frac{\alpha-\lambda+1}{2}}\|_{\textbf{L}^1(\Omega)})~~~~if~\frac{\alpha-\lambda+1}{2}>0,\\
&&\|(\theta^n)^{\frac{\alpha-\lambda+1}{2}}\|_{\textbf{L}^2(\Omega)}\leq C~~~~if~\frac{\alpha-\lambda+1}{2}\leq0.
\end{eqnarray*}
Thus by these estimates and (\ref{E3-14}), we have
\begin{eqnarray*}
\int_0^T\|(\theta^n)^{\frac{\alpha-\lambda-1}{2}}\|_{\textbf{W}^{1,2}(\Omega)}^2dt\leq M.
\end{eqnarray*}
Furthermore,  by $\textbf{W}^{1,2}(\Omega)\hookrightarrow\textbf{L}^2(\Omega)$, it holds
\begin{eqnarray}\label{E3-15}
\int_0^T\|\theta^n\|^{\alpha-\lambda-1}_{\textbf{L}^{3(\alpha-\lambda-1)}(\Omega)}dt\leq M.
\end{eqnarray}
By the standard interpolation of (\ref{E3-15}) with (\ref{E3-13}), for $\alpha>-\frac{2}{3}$, we derive
\begin{eqnarray}\label{E3-15'}
\int_0^T\|\theta^n\|_{\textbf{L}^s(\Omega)}^sdt\leq M~~for~~all~s\in[1,\frac{5}{3}+\alpha).
\end{eqnarray}
Using (\ref{E1-3'}), (\ref{E3-14'}) and (\ref{E3-15'}), for $1\leq m<\frac{5+3\alpha}{4+3\alpha}$, we have
\begin{eqnarray}\label{E3-15''}
&&\int_{(0,T)\times\Omega}|\kappa_0(\rho^n,\theta^n)\nabla\theta^n|^mdxdt\nonumber\\
&\leq&\underline{\kappa}^m\int_{(0,T)\times\Omega}|\theta^n|^{m\alpha}|\nabla\theta^n|^mdxdt\nonumber\\
&=&\underline{\kappa}\int_{(0,T)\times\Omega}(\theta^n)^{\frac{m(\alpha-\lambda-1)}{2}}|\nabla\theta^n|^m(\theta^n)^{\frac{m(\alpha+\lambda+1)}{2}}dxdt\nonumber\\
&\leq&\underline{\kappa}\|(\theta^n)^{\frac{\alpha-\lambda-1}{2}}|\nabla\theta^n|\|_{\textbf{L}^2((0,T)\times\Omega)}
\|(\theta^n)^{\frac{\alpha+\lambda+1}{2}}\|_{\textbf{L}^{\frac{2}{\alpha+\lambda+1}(\frac{5+3\alpha}{3}-\delta)}((0,T)\times\Omega)}\nonumber\\
&\leq&M.
\end{eqnarray}
On the other hand, by (\ref{E3-8'}) and (\ref{E3-13}), using the interpolation inequality, H\"{o}lder's inequality and the standard Sobolev imbedding, we have that for some $\beta>1$
\begin{eqnarray}\label{E3-16}
\int_0^T\|\rho^n\textbf{u}^nQ(\theta^n)\|^{\beta}_{\beta}dt&\leq&\int_0^T\rho^*\|\textbf{u}^n\|^{\beta}_{\textbf{L}^{p_2}(\Omega)}\|\theta^n\|_{\textbf{L}^{p_3}(\Omega)}^{\beta}dt\nonumber\\
&\leq&\int_0^T\rho^*\|\textbf{u}^n\|^{\beta}_{\textbf{W}^{1,r}(\Omega)}\|\theta^n\|_{\textbf{L}^{1}(\Omega)}^{(1-\upsilon)\beta}\|\theta^n\|_{\textbf{L}^{p_4}(\Omega)}^{\beta\upsilon}dt\nonumber\\
&\leq&C\int_0^T\|\textbf{u}^n\|^{\beta}_{\textbf{W}^{1,r}(\Omega)}\|\theta^n\|_{\textbf{L}^{p_4}(\Omega)}^{\beta\upsilon}dt\nonumber\\
&\leq&C\left(\int_0^T\|\textbf{u}^n\|^{\frac{p_4\beta}{p_4-\beta\upsilon}}_{\textbf{W}^{1,r}(\Omega)}dt\right)^{\frac{p_4-\beta\upsilon}{p_4}}\left(\int_0^T\|\theta^n\|_{\textbf{L}^{p_4}(\Omega)}^{p_4}dt\right)^{\frac{\beta\upsilon}{p_4}},~~~~~~
\end{eqnarray}
where
\begin{eqnarray*}
&&\frac{1}{\beta}=\frac{1}{p_2}+\frac{1}{p_3},\nonumber\\
&&\frac{\beta}{p_3}=\frac{1}{\varpi}+\frac{\beta-\frac{1}{\varpi}}{p_4},~~\frac{1}{\varpi}=(1-\upsilon)\beta.
\end{eqnarray*}
Direct computation shows that
\begin{eqnarray*}
p_2=\frac{r\varpi\beta^2-r\beta}{\varpi\beta^2-\beta(r+1)+\frac{r}{\varpi}},~~p_3=\frac{p_2\beta}{p_2-\beta},~~p_4=\frac{p_2(\varpi\beta-1)}{(\varpi-1)p_2-\varpi\beta}.
\end{eqnarray*}
Note that $1\leq p_4<\frac{5}{3}+\alpha$ and $\alpha>-\frac{2}{3}$. We get from (\ref{E3-16}) that
\begin{eqnarray}\label{E3-17}
\int_0^T\|\rho^n\textbf{u}^n\theta^n\|^{\beta}_{\beta}dt\leq
C\left(\int_0^T\|\textbf{u}^n\|^{\frac{p_4\beta}{p_4-\beta+\frac{1}{\varpi}}}_{\textbf{W}^{1,r}(\Omega)}dt\right)^{\frac{p_4-\beta+\frac{1}{\varpi}}{p_4}}
\left(\int_0^T\|\theta^n\|_{\textbf{L}^{p_4}(\Omega)}^{p_4}dt\right)^{\frac{\beta-\frac{1}{\varpi}}{p_4}},~~~~
\end{eqnarray}
where
\begin{eqnarray}\label{E3-18}
p_2\geq r,~~1<\beta<p_3\leq\varpi\beta,~~\beta<\frac{r}{2},~~1<\varpi<\frac{r}{\beta}.
\end{eqnarray}
Hence we require the restriction that $r\geq\frac{11}{5}$.

Finally, by (\ref{E3-1R1}), (\ref{E3-10}) and (\ref{E3-15''}), for sufficiently large $q$, we deduce that
\begin{eqnarray}\label{E3-18'}
\|\partial_t(\rho^n\theta^n)\|_{\textbf{L}^1(0,T;(\textbf{W}^{1,q}(\Omega))^*)}&=&\int_0^T\|\partial_t(\rho^n\theta^n)(s)\|_{(\textbf{W}^{1,q}(\Omega))^*}ds\nonumber\\
&=&\int_0^T\sup_{\|h\|_{\textbf{W}^{1,q}(\Omega)}\leq1}|\langle (\rho^n\theta^n)_t(s),h\rangle|ds\nonumber\\
&\leq&\|g^n(s)\|_{\textbf{L}^1(\Omega)}\leq M.
\end{eqnarray}
It follows from (\ref{E3-10'}) that
\begin{eqnarray*}
\|\textbf{u}^n\times\textbf{H}^n\|_{\textbf{L}^{\frac{2r}{r+2}}(\Omega)}\leq M,
\end{eqnarray*}
which implies that
\begin{eqnarray}\label{E8-2}
\nabla\times\textbf{H}^n\in\textbf{L}^2(0,T;\textbf{L}^{2}(\Omega)),
\end{eqnarray}
\begin{eqnarray}\label{E8-5}
\textbf{u}^n\times\textbf{H}^n\in\textbf{L}^r(0,T;\textbf{L}^{\frac{2r}{r+2}}(\Omega)).
\end{eqnarray}
Note that $r\geq\frac{11}{5}$. By (\ref{E8-2}), (\ref{E8-5}) and (\ref{E3-1R2}), we drive
\begin{eqnarray}\label{E3-18R}
\|\partial_t\textbf{H}^n\|_{\textbf{L}^2(0,T;(\textbf{W}^{1,2}(\Omega))^*)}\leq M.
\end{eqnarray}
Using Alaoglu-Bourbaki theorem and the uniform estimates (\ref{E3-8'}), (\ref{E3-9})-(\ref{E3-10'}), (\ref{E3-13})-(\ref{E3-15'}), (\ref{E3-18'})-(\ref{E3-18R}), for $n\in\textbf{N}$, there exist a subsequences (denoted by itself) $\{\rho^n\}$, $\{\textbf{u}^n\}$, $\{\theta^n\}$ and $\{\textbf{H}^n\}$ , and $(\rho,\textbf{u},\theta,\textbf{H})$ such that
\begin{eqnarray}\label{E3-19}
\rho^n\rightharpoonup\rho&&weakly~~in~~\textbf{L}^p((0,T)\times\Omega)~for~any~q\in[1,\infty)\nonumber\\
&&and~*-weakly~in~\textbf{L}^{\infty}((0,T)\times\Omega),
\end{eqnarray}
\begin{eqnarray}\label{E3-19'}
0<\rho_*\leq\rho(t,x)\leq\rho^*<\infty~~for~~a.a.~(t,x)\in(0,T)\times\Omega,
\end{eqnarray}
\begin{eqnarray}\label{E3-19''}
\rho_t^n\rightharpoonup\rho_t~~weakly~~in~\textbf{L}^{\frac{5r}{3}}(0,T;(\textbf{W}^{1,\frac{5r}{5r-3}}(\Omega))^*),
\end{eqnarray}
\begin{eqnarray}\label{E3-19R1}
&&\textbf{u}^n\rightharpoonup\textbf{u}~~weakly~~in~~\textbf{L}^r(0,T;\textbf{W}_{0,\textbf{div}}^{1,r}(\Omega))~~and~~\textbf{L}^{\frac{5r}{3}}((0,T)\times\Omega)^3,\nonumber\\
&&~~*-weakly~~in~~\textbf{L}^{\infty}(0,T;\textbf{L}^2_{\textbf{div}}(\Omega)),
\end{eqnarray}
\begin{eqnarray*}
\theta^n\rightharpoonup\theta~~weakly~~in~~\textbf{L}^q((0,T)\times\Omega)~~for~~any~~q\in[1,\frac{5+3\alpha}{3}),
\end{eqnarray*}
\begin{eqnarray}\label{E3-19R2}
\theta^n\geq\theta^*>0~~for~~a.a.~(t,x)\in(0,T)\times\Omega,
\end{eqnarray}
\begin{eqnarray}\label{E3-22}
\textbf{H}^n\rightharpoonup\textbf{H}&&weakly~~in~~\textbf{L}^2(0,T;\textbf{W}_{0,\textbf{div}}^{1,2}(\Omega))\nonumber\\ &&and~~*-weakly~~in~~\textbf{L}^{\infty}(0,T;\textbf{L}^2_{\textbf{div}}(\Omega)),
\end{eqnarray}
\begin{eqnarray}\label{E8-6}
\nabla\times\textbf{H}^n\rightharpoonup\nabla\times\textbf{H}&&weakly~~in~~\textbf{L}^2(0,T;\textbf{L}^{2}(\Omega)),
\end{eqnarray}
\begin{eqnarray}\label{E8-1}
\textbf{H}^n_t\rightharpoonup\textbf{H}_t&&weakly~~in~~\textbf{L}^2(0,T;(\textbf{W}^{1,2}(\Omega))^*).
\end{eqnarray}
Furthermore, there holds
\begin{eqnarray*}
\rho^n\textbf{u}^n\rightharpoonup\overline{\rho\textbf{u}}~~weakly~~in~~\textbf{L}^{p_1}((0,T)\times\Omega)^3,
\end{eqnarray*}
\begin{eqnarray}\label{E3-22R}
\textbf{S}(\rho^n,\theta^n,\textbf{D}(\textbf{u}^n))\rightharpoonup\overline{\textbf{S}}&&weakly~~in~~\textbf{L}^{r'}((0,T)\times\Omega)^{3\times3},\nonumber\\
&&\overline{\textbf{S}}=\overline{\textbf{S}}^T,~~\textbf{S}~~being~traceless,
\end{eqnarray}
\begin{eqnarray}\label{E3-21}
(\theta^n)^{\gamma}\rightharpoonup\overline{(\theta)^{\gamma}}~~weakly~~in~~\textbf{L}^{2}((0,T);\textbf{W}^{1,2}(\Omega))~~for~\gamma\in(0,\frac{\alpha+1}{2}),
\end{eqnarray}
where
$\overline{\rho\textbf{u}}\in\textbf{L}^{p_1}(\textbf(0,T)\times\Omega)^3$, $\overline{\textbf{S}}\in\textbf{L}^{r'}(\textbf(0,T)\times\Omega)^{3\times3}$ and $\overline{\theta^{\gamma}}\in\textbf{L}^2((0,T);\textbf{W}^{1,2}(\Omega))$.

\section{The strong convergence of $\{\rho^n\}$, $\{\textbf{u}^n\}$, $\{\theta^n\}$, $\{\textbf{H}^n\}$ and $\{\textbf{D}(\textbf{u}^n)\}$}
Before we prove the strong convergence of $\{\rho^n\}$, $\{\textbf{u}^n\}$, $\{\theta^n\}$, $\{\textbf{H}^n\}$ and $\{\textbf{D}(\textbf{u}^n)\}$, we recall two main tools: the Aubin-Lions Lemma (see \cite{Simon} or \cite{Hu0}) and the Div-Curl Lemma (see \cite{Fei1,Freh2}).
\begin{lemma}(Lions-Aubin lemma)
Let $T>0$, $p\in(1,\infty)$ and $\{u_n\}_{n=1}^{\infty}$ be a bounded sequence of functions in $\textbf{L}^p(0,T;X)$ where $X$ is a Banach space. If $\{u_n\}_{n=1}^{\infty}$ is also bounded in $\textbf{L}^p(0,T;Y)$, where $Y$ is compactly imbedded in $X$ and $\{\partial_tu_n\}_{n=1}^{\infty}$ is bounded in $\textbf{L}^p(0,T;Z)$ uniformly where $X\subset Z$. Then $\{u_n\}_{n=1}^{\infty}$ is relatively compact in $\{u_n\}_{n=1}^{\infty}$.
\end{lemma}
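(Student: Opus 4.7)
The plan is to prove the intended statement, namely that $\{u_n\}_{n=1}^\infty$ is relatively compact in $\textbf{L}^p(0,T;X)$ (the spatial target in the statement appears to contain a typo). I will invoke the Fr\'echet--Kolmogorov--M.~Riesz characterization of precompact subsets of Bochner $L^p$ spaces with values in a Banach space: a bounded family $\mathcal{F}\subset\textbf{L}^p(0,T;X)$ is relatively compact if and only if (a) time translations are equicontinuous, i.e.\ $\sup_{f\in\mathcal{F}}\|f(\cdot+h)-f(\cdot)\|_{L^p(0,T-h;X)}\to 0$ as $h\to 0^+$, and (b) for every $0<a<b<T$ the averaged set $\{\int_a^b f(t)\,dt:f\in\mathcal{F}\}$ is precompact in $X$. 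Condition (b) is immediate: by H\"older and the uniform bound in $\textbf{L}^p(0,T;Y)$, each average $\int_a^b u_n(t)\,dt$ lies in a bounded subset of $Y$, and the hypothesis $Y\hookrightarrow\hookrightarrow X$ converts this bounded set into a precompact one in $X$. The main work is therefore condition (a).

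The key tool for the equicontinuity is the Ehrling (interpolation) lemma, which follows from $Y\hookrightarrow\hookrightarrow X$ and $X\hookrightarrow Z$: for every $\varepsilon>0$ there exists $C_\varepsilon>0$ with
\begin{equation*}
\|v\|_X\leq\varepsilon\|v\|_Y+C_\varepsilon\|v\|_Z\qquad\forall\,v\in Y.
\end{equation*}
Applying this pointwise in $t$ to $v=u_n(t+h)-u_n(t)$ and raising to the $p$-th power yields
\begin{equation*}
\|u_n(\cdot+h)-u_n(\cdot)\|_{L^p(0,T-h;X)}\leq\varepsilon\,\|u_n(\cdot+h)-u_n(\cdot)\|_{L^p(0,T-h;Y)}+C_\varepsilon\,\|u_n(\cdot+h)-u_n(\cdot)\|_{L^p(0,T-h;Z)}.
\end{equation*}
The $Y$-term is bounded by $2\sup_n\|u_n\|_{L^p(0,T;Y)}\leq 2M$, so the first contribution can be forced below any prescribed $\delta/2$ by a suitable choice of $\varepsilon$, leaving only the $Z$-translation to control.

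For the $Z$-estimate I use the representation $u_n(t+h)-u_n(t)=\int_t^{t+h}\partial_s u_n(s)\,ds$, H\"older's inequality in time, and Fubini to obtain
\begin{equation*}
\|u_n(\cdot+h)-u_n(\cdot)\|_{L^p(0,T-h;Z)}^p\leq h^{p/p'}\cdot h\cdot\|\partial_t u_n\|_{L^p(0,T;Z)}^p\leq Ch^{\,p},
\end{equation*}
so this term tends to $0$ as $h\to 0^+$ uniformly in $n$ (here $p/p'+1=p$). Choosing $\varepsilon$ first and then $h$ small gives (a), and Fr\'echet--Kolmogorov closes the argument. The one technical point I expect to need care with, and therefore the main obstacle, is the justification of the identity $u_n(t+h)-u_n(t)=\int_t^{t+h}\partial_s u_n(s)\,ds$ when $u_n$ is only known to have a weak $Z$-valued time derivative in $L^p$. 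This is resolved either by recalling that $\partial_t u_n\in L^p(0,T;Z)$ forces $u_n$ to admit an absolutely continuous representative as a map $[0,T]\to Z$, or equivalently by mollifying in time, applying the identity to the smooth approximants, and passing to the limit; once this representation is secured, the rest of the proof is a routine chain of H\"older, Ehrling, and Fr\'echet--Kolmogorov.
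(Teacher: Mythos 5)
Your proof is correct. Note first that the paper does not prove this lemma at all: it is quoted as a known tool with a pointer to Simon's paper and to Hu--Wang, so there is no in-text argument to compare against. Your route --- the Fr\'echet--Kolmogorov--Riesz characterization of precompact sets in Bochner spaces, Ehrling's interpolation inequality to trade the $X$-norm of a time increment for an $\varepsilon$-multiple of the (uniformly bounded) $Y$-norm plus a controlled $Z$-norm, and the fundamental theorem of calculus for the $Z$-valued weak derivative to get $\|u_n(\cdot+h)-u_n(\cdot)\|_{L^p(0,T-h;Z)}\leq h\,\|\partial_t u_n\|_{L^p(0,T;Z)}$ --- is essentially Simon's own proof of this result, and all the steps check out (the exponent bookkeeping $h^{p/p'}\cdot h=h^p$ is right, and the averaging criterion (b) does follow from boundedness in $L^p(0,T;Y)$ together with $Y\hookrightarrow\hookrightarrow X$). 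You also correctly repaired two defects of the statement as printed: the conclusion should read ``relatively compact in $\textbf{L}^p(0,T;X)$'' rather than ``in $\{u_n\}_{n=1}^{\infty}$'', and the separate boundedness hypothesis in $\textbf{L}^p(0,T;X)$ is redundant given boundedness in $\textbf{L}^p(0,T;Y)$. The only point you flagged as delicate --- that a weak $Z$-valued derivative in $L^p$ yields an absolutely continuous $Z$-valued representative --- is indeed the standard fact needed, and your mollification fallback handles it; nothing further is missing.
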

For $\textbf{a}=(a_0,a_1,a_2,a_3)$ we set
\begin{eqnarray*}
\textbf{Div}_{t,x}\textbf{a}=(a_0)_t+\sum_{i=1}^3(a_i)_{x_i}~~and~~\textbf{Curl}_{t,x}\textbf{a}=\nabla_{t,x}\textbf{a}-(\nabla_{t,x}\textbf{a})^T,
\end{eqnarray*}
where $\nabla_{t,x}\textbf{a}=(\textbf{a}_t^T,\textbf{a}_{x_1}^T,\textbf{a}_{x_2}^T,\textbf{a}_{x_3}^T)$.
\begin{lemma}(Div-Curl Lemma)
Let $p$, $q$, $l$, $s\in(1,\infty)$ be such that $\frac{1}{p}+\frac{1}{q}=\frac{1}{l}$. Assume that $\{\textbf{a}^n\}$ and $\{\textbf{b}^n\}$ satisfy
\begin{eqnarray*}
&&\textbf{a}^n\rightharpoonup\textbf{a}~~weakly~~in~~\textbf{L}^p((0,T)\times\Omega)^4,\\
&&\textbf{b}^n\rightharpoonup\textbf{b}~~weakly~~in~~\textbf{L}^q((0,T)\times\Omega)^4,
\end{eqnarray*}
and $\textbf{Div}_{t,x}\textbf{a}^n$ and $\textbf{Curl}_{t,x}\textbf{b}^n$ are precompact in $\textbf{W}^{-1,s}((0,T)\times\Omega)$ and $\textbf{W}^{-1,s}((0,T)\times\Omega)^{4\times4}$, respectively.

Then
\begin{eqnarray*}
\textbf{a}^n\cdot\textbf{b}^n\rightharpoonup\textbf{a}\cdot\textbf{b}~~weakly~~in~~\textbf{L}^l((0,T)\times\Omega),
\end{eqnarray*}
where $\cdot$ represents the scalar product in $\textbf{R}^4$.
\end{lemma}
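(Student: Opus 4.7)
This is a compensated-compactness statement of the Murat--Tartar type, so the natural line of attack is Hodge decomposition combined with $\textbf{L}^p$-boundedness of Riesz transforms. The plan is to work locally in $\textbf{R}^{1+3}$ after multiplying both sequences by a cut-off $\chi\in C^\infty_c$ (the commutators produced by $\chi$ contribute only strongly convergent terms), extending by zero, and verifying the desired weak convergence against arbitrary $\varphi\in C^\infty_c$.

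After this localization, I would apply the Hodge decomposition in $\textbf{R}^4$ simultaneously to both sequences,
\[
\textbf{a}^n = \nabla_{t,x}\phi^n + \textbf{a}^n_\sigma,\qquad \Delta_{t,x}\phi^n=\textbf{Div}_{t,x}\textbf{a}^n,\qquad\textbf{Div}_{t,x}\textbf{a}^n_\sigma=0,
\]
and perform a dual decomposition of $\textbf{b}^n$ that splits off its curl part. Since $\textbf{Div}_{t,x}\textbf{a}^n$ is precompact in $\textbf{W}^{-1,s}$, standard elliptic regularity yields precompactness of $\nabla_{t,x}\phi^n$ in $\textbf{L}^s_{\mathrm{loc}}$; interpolating with the uniform $\textbf{L}^p$ bound gives strong convergence of $\nabla_{t,x}\phi^n$ in some Lebesgue space whose H\"older pairing against $\textbf{L}^q$ sits inside $\textbf{L}^l$. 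The curl-potential argument for $\textbf{b}^n$ is analogous. Splitting
\[
\textbf{a}^n\cdot\textbf{b}^n = \nabla_{t,x}\phi^n\cdot\textbf{b}^n + \textbf{a}^n_\sigma\cdot\textbf{b}^n,
\]
the first term passes to the limit by the strong-times-weak principle; in the second, integration by parts against a test function rewrites the pairing of a divergence-free vector field with $\textbf{b}^n$ as a pairing with $\textbf{Curl}_{t,x}\textbf{b}^n$ (up to boundary and commutator pieces), and precompactness of the curl then identifies the weak limit.

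The main technical obstacle I anticipate is bookkeeping of Lebesgue exponents under the various interpolations, so as to keep compatibility with $1/p+1/q=1/l$: the strong $\textbf{L}^\rho$ convergence obtained by interpolating $\textbf{W}^{-1,s}$ compactness against the $\textbf{L}^p$ bound must satisfy $1/\rho+1/q\le 1/l$, which forces $s$ not to be too small relative to $p,q,l$. The other delicate point is the orthogonality between gradient and divergence-free fields that makes the cross terms vanish in the limit; this is ultimately a consequence of the $\textbf{L}^p$-continuity of Riesz transforms for $p\in(1,\infty)$, which is the analytic engine behind the whole argument. In the present paper, since the result is standard, I would simply invoke it by citing \cite{Fei1,Freh2} rather than reproducing the proof.
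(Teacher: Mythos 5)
The paper offers no proof of this lemma at all: it is recalled as a known tool with the citation \cite{Fei1,Freh2}, which is exactly what you propose doing in your closing sentence, so on the substantive question of how to handle it in this paper you and the authors agree. Your sketch of an actual proof is the standard Murat--Tartar argument in its $\textbf{L}^p$--$\textbf{L}^q$ form and is essentially sound: localize by a cutoff, split $\textbf{a}^n$ via the Hodge decomposition into a gradient part $\nabla_{t,x}\phi^n$ (recovered from $\textbf{Div}_{t,x}\textbf{a}^n$ by Riesz transforms, which are bounded on $\textbf{L}^s$ for $s\in(1,\infty)$, hence precompact in $\textbf{L}^s_{\mathrm{loc}}$ and, after interpolating with the uniform $\textbf{L}^p$ bound, strongly convergent in some $\textbf{L}^{\rho}$ with $\frac{1}{\rho}+\frac{1}{q}\leq1$, which is available because $l>1$) and a divergence-free part, which one writes as the divergence of an antisymmetric potential so that its pairing with $\textbf{b}^n$ is transferred onto the precompact $\textbf{Curl}_{t,x}\textbf{b}^n$. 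Two points would need explicit care in a written version: first, the asserted conclusion is weak convergence in $\textbf{L}^l$ rather than merely in the sense of distributions, so you must also record the uniform $\textbf{L}^l$ bound on $\textbf{a}^n\cdot\textbf{b}^n$ coming from H\"older's inequality and then upgrade the distributional identification of the limit; second, when the given $s$ is small relative to $p$ and $q$ the interpolation step is really an instance of Murat's interpolation lemma (bounded in $\textbf{W}^{-1,p}$ plus precompact in $\textbf{W}^{-1,s}$ implies precompact in intermediate spaces), which you correctly single out as the delicate exponent bookkeeping. Since the cited references contain precisely this argument, invoking them is the appropriate resolution here, and your sketch is a faithful summary of what they prove.
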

By (\ref{E3-19})-(\ref{E3-19'}) and Lions-Aubin Lemma 1, we have
\begin{eqnarray*}
\rho^n\longrightarrow\rho~~strongly~~in~~\textbf{C}([0,T];(\textbf{W}^{1,\frac{p_1}{p_1-1}})^*).
\end{eqnarray*}
We take $\textbf{a}^n=(\rho^n,\rho^nu^n_1,\rho^nu_2^n,\rho^nu_3^n)$ and $\textbf{b}^n=(u_i,0,0,0)$, $i=\{1,2,3\}$.
It follows from (\ref{E3-19})-(\ref{E3-19'}) that
\begin{eqnarray*}
\textbf{a}^n\rightharpoonup(\rho,\overline{\rho u_1},\overline{\rho u_2},\overline{\rho u_3})~~weakly~~in~~\textbf{L}^{q}((0,T)\times\Omega)^4~~\forall~q\in[1,p_1],
\end{eqnarray*}
\begin{eqnarray*}
\textbf{b}^n\rightharpoonup(u_i,0,0,0)~~weakly~~in~~\textbf{L}^{p_1}((0,T)\times\Omega)^4.
\end{eqnarray*}
Furthermore, we notice that $\textbf{L}^{r}((0,T)\times\Omega)^{3\times3}\hookrightarrow\hookrightarrow\textbf{W}^{-1,r}((0,T)\times\Omega)$ and
\begin{eqnarray*}
\textbf{Div}_{t,x}\textbf{a}^n=(\rho^n)_t+\textbf{div}(\rho^n\textbf{u}^n)=0,
\end{eqnarray*}
\begin{eqnarray*}
\textbf{Curl}_{t,x}\textbf{b}^n=\left(
\begin{array}{ccc}
 0& \nabla\textbf{u}^n  \\
-(\nabla\textbf{u}^n)^T &o
\end{array}
\right), ~~~~(o~denotes~zero~3\times3~matrix)
\end{eqnarray*}
and $\nabla\textbf{u}^n$ is bounded in $\textbf{L}^r((0,T)\times\Omega)^{3\times3}$. By Div-Curl Lemma 2, we have
\begin{eqnarray}\label{E3-20}
\rho^nu_i^n\rightharpoonup\rho u_i~~weakly~~in~~\textbf{L}^q((0,T)\times\Omega)~~\forall q\in[1,\frac{p_1}{2}].
\end{eqnarray}
It follows from (\ref{E3-19''}) and (\ref{E3-20}) that $\rho$ and $\textbf{u}$ satisfy (\ref{E2-2}). Take the test function of the form $\chi_{(t_1,t_2)}h$, $h\in\textbf{W}^{1,\frac{5r}{5r-3}}(\Omega)$ in (\ref{E2-2}). Then partial integration with respect to time and the density of $\textbf{L}^1((0,T);\textbf{W}^{1,\frac{5r}{5r-3}}(\Omega))$ implies that $\rho\in\textbf{C}([0,T];\textbf{L}_{weak}^{\infty}(\Omega))$, i.e.,
\begin{eqnarray}\label{E4-2R}
\lim_{t\longrightarrow t_0}(\rho(t),h)=(\rho(t_0),h)~~\forall t_0\in[0,T].
\end{eqnarray}
Using the concept of renormalized solutions to the equation (\ref{E3-1}) and following the method in \cite{Lions1}, we have
\begin{eqnarray}\label{E4-1}
\rho^n\longrightarrow\rho~~strongly~~in~~\textbf{C}([0,T];\textbf{L}^{q}(\Omega))~~\forall q\in[1,\infty)~~and~~a.e.~in~(0,t)\times\Omega,~~~~
\end{eqnarray}
and
\begin{eqnarray}\label{E4-1R}
\lim_{t\longrightarrow 0^+}\|\rho(t)-\rho_0\|_{\textbf{L}^q(\Omega)}=0~~\forall q\in[1,\infty).
\end{eqnarray}
In the following, we prove the convergence of $\{\textbf{u}^n\}$. By (\ref{E3-13'}) and (\ref{E3-20}), we have
\begin{eqnarray}\label{E4-2}
\partial_t(\rho^n\textbf{u}^n)\rightharpoonup\partial_t(\rho\textbf{u})~~weakly~~in~~\textbf{L}^{r'}(0,T;\textbf{W}_{\textbf{div}}^{-1,r'}(\Omega)).
\end{eqnarray}
Furthermore, it follows from (\ref{E3-9}), (\ref{E3-19R1}) and (\ref{E4-1}) that
\begin{eqnarray}\label{E4-3}
\sqrt{\rho^n}\textbf{u}^n\rightharpoonup\sqrt{\rho}\textbf{u}~~weakly~~in~~\textbf{L}^{2}((0,T)\times\Omega)^3.
\end{eqnarray}
Introduce the Helmholtz decomposition
\begin{eqnarray*}
&&\textbf{u}=\mathcal{H}[\textbf{u}]+\mathcal{H}^{\perp}[\textbf{u}],\\
&&\mathcal{H}^{\perp}[\textbf{u}]=\nabla\phi,~~\mathcal{H}[\textbf{u}]=\textbf{curl}\varphi,
\end{eqnarray*}
where $\phi$ is given by the solution to the Neumann problem
\begin{eqnarray*}
&&\triangle\phi=\textbf{div}\textbf{u}~~x\in\Omega,\\
&&\frac{\partial\phi}{\partial\textbf{n}}=0~~x\in\partial\Omega,~~\int_{\Omega}\phi dx=0,
\end{eqnarray*}
and $\varphi$ satisfies the following elliptic problem
\begin{eqnarray*}
&&\textbf{curl}\mathcal{H}^{\perp}[\textbf{u}]=\nabla\textbf{u}=\omega,~~x\in\Omega,\\
&&\textbf{div}\mathcal{H}^{\perp}[\textbf{u}]=0,~~x\in\Omega,\\
&&\mathcal{H}^{\perp}[\textbf{u}]\cdot\textbf{n}=0~~x\in\partial\Omega.
\end{eqnarray*}
Then by the Aubin-Lions Lemma 1, (\ref{E3-13'}), (\ref{E3-20}), (\ref{E4-3}) and (\ref{E4-2}), we have
\begin{eqnarray}\label{E4-4}
(\rho^n\textbf{u}^n)_{\textbf{div}}\longrightarrow(\rho\textbf{u})_{\textbf{div}}~~strongly~~in~~\textbf{C}(0,T;\textbf{W}_{\textbf{div}}^{-1,r'}(\Omega)),
\end{eqnarray}
which together with (\ref{E3-19R1}) implies
\begin{eqnarray}\label{E4-4R1}
\sqrt{\rho^n}\textbf{u}^n(t)\longrightarrow\sqrt{\rho}\textbf{u}(t)~~strongly~~in~~\textbf{L}^{2}((0,T)\times\Omega)^3~~for~almost~all~t\in[0,T].~~~~
\end{eqnarray}
This together with (\ref{E3-13'}) and (\ref{E4-1}), (\ref{E3-19'})-(\ref{E3-19R1}) shows that
\begin{eqnarray}\label{E4-1'}
\textbf{u}^n\longrightarrow\textbf{u}~~strongly~~in~~\textbf{L}^{q}((0,T)\times\Omega)^3~~for~all~q\in[1,p_1)~~and~~a.e.~in~(0,T)\times\Omega,~~~~
\end{eqnarray}
and for $r\geq\frac{11}{5}$
\begin{eqnarray}\label{E4-4R}
\rho^n\textbf{u}^n\otimes\textbf{u}^n\rightharpoonup\rho\textbf{u}\otimes\textbf{u}~~weakly~~in~~\textbf{L}^{r'}(0,T;\textbf{L}^{r'}(\Omega)).
\end{eqnarray}
Now we give the strong convergence properties of $\{\theta^n\}$. We take $\textbf{a}^n=(\rho^nQ(\theta^n),R_1^n,R_2^n,R_3^n)$ with $R^n:=\rho^n\theta^n\textbf{u}^n+\kappa(\rho^n,\theta^n)\nabla\theta^n$, and $\textbf{b}^n=((\theta^n)^{\gamma},0,0,0)$ with $\beta\in(0,\frac{\alpha+1}{2})$ rather small. Then by (\ref{E3-15''}), (\ref{E3-17}), (\ref{E3-19R2}) and (\ref{E4-1}), we observe that for $\frac{1}{a}+\frac{1}{b}=1$,
\begin{eqnarray*}
&&\textbf{a}^n\rightharpoonup\textbf{a}~~weakly~~in~~\textbf{L}^a((0,T)\times\Omega),\\
&&\textbf{b}^n\rightharpoonup(\overline{\theta^{\beta}},0,0,0)~~weakly~~in~~\textbf{L}^b((0,T)\times\Omega).
\end{eqnarray*}
For $1<s_1<2$, we have
\begin{eqnarray*}
\textbf{Div}_{t,x}\textbf{a}^n&=&\partial_t(\rho^nQ(\theta^n))+\textbf{div}R^n=\nu|\nabla\times\textbf{H}^n|^2\nonumber\\
&&+S(\rho^n,\theta^n,\textbf{D}(\textbf{u}^n))\cdot\textbf{D}(\textbf{u}^n) \subset\textbf{L}^1((0,T)\times\Omega)\hookrightarrow\hookrightarrow\textbf{W}^{-1,s_1}((0,T)\times\Omega),
\end{eqnarray*}
and
\begin{eqnarray*}
\textbf{Curl}_{t,x}\textbf{b}^n=\left(
\begin{array}{ccc}
 0& \nabla(\theta^n)^{\gamma}  \\
-(\nabla(\theta^n)^{\gamma})^T &o,
\end{array}
\right)\subset\textbf{L}^2((0,T)\times\Omega)^{4\times4}\hookrightarrow\hookrightarrow\textbf{W}^{-1,2}((0,T)\times\Omega)^{4\times4}.
\end{eqnarray*}
Using the Div-Curl Lemma 2, for some $\sigma_1,~~\sigma_2>0$, we have
\begin{eqnarray}\label{E4-5}
&&\rho^n(\theta^n)^{1+\gamma}\rightharpoonup\rho\theta\overline{\theta^{\gamma}}~~weakly~~in~~\textbf{L}^{1+\sigma_1}((0,T)\times\Omega),\nonumber\\
&&\rho(\theta^n)^{1+\gamma}\rightharpoonup\rho\theta\overline{\theta^{\gamma}}~~weakly~~in~~\textbf{L}^{1+\sigma_2}((0,T)\times\Omega).
\end{eqnarray}
Then by Minty's method and (\ref{E4-5}), we get
\begin{eqnarray}\label{E4-6}
\overline{\theta^{\gamma}}=\theta^{\gamma}~~a.e.~~in~~(0,T)\times\Omega.
\end{eqnarray}
It follows from (\ref{E4-5})-(\ref{E4-6}) that
\begin{eqnarray*}
\rho^{\frac{1}{\gamma+1}}\theta^n\longrightarrow\rho^{\frac{1}{\gamma+1}}\theta~~strongly~~in~~\textbf{L}^{\gamma+1}((0,T)\times\Omega),
\end{eqnarray*}
which together with (\ref{E3-15'}), (\ref{E3-19'}) and (\ref{E3-19R2}) implies
\begin{eqnarray}\label{E4-7}
\theta^n\longrightarrow\theta~~strongly~~in~~\textbf{L}^q((0,T)\times\Omega)~~\forall~q\in[1,\frac{5}{3}+\alpha).
\end{eqnarray}
By (\ref{E3-17}), (\ref{E3-21}), (\ref{E4-1}), (\ref{E4-1'}) and (\ref{E4-7}), we have
\begin{eqnarray}\label{E4-7R1}
\rho^n\theta^n\longrightarrow\rho\theta~~strongly~~in~~\textbf{L}^q((0,T)\times\Omega)~~\forall q\in[1,\frac{5}{3}+\alpha),
\end{eqnarray}
\begin{eqnarray*}
\rho^n\theta^n\textbf{u}^n\longrightarrow\rho\theta\textbf{u}~~strongly~~in~~\textbf{L}^1((0,T)\times\Omega),
\end{eqnarray*}
\begin{eqnarray*}
(\theta^n)^{\gamma}\rightharpoonup\theta^{\gamma}~~weakly~~in~~\textbf{L}^{2}((0,T);\textbf{W}^{1,2}(\Omega))~~for~\gamma\in(0,\frac{\alpha+1}{2}),
\end{eqnarray*}
which implies that
\begin{eqnarray}\label{E4-8R1}
\nabla(\theta^n)^{\frac{\alpha-\lambda+1}{2}}\rightharpoonup\nabla\theta^{\frac{\alpha-\lambda+1}{2}}~~weakly~~in~~\textbf{L}^{2}((0,T);\textbf{L}^{2}(\Omega)).
\end{eqnarray}
It follows from (\ref{E2-1R}) and (\ref{E4-7R1}) that
\begin{eqnarray}\label{E4-7R2}
\rho^nQ(\theta^n)\longrightarrow\rho Q(\theta)~~strongly~~in~~\textbf{L}^q((0,T)\times\Omega)~~\forall q\in[1,\frac{5}{3}+\alpha).~~
\end{eqnarray}
Note that
\begin{eqnarray}\label{E4-8R}
\textbf{q}(\rho^n,\theta^n,\nabla\theta^n)=\kappa_0(\rho^n,\theta^n)\nabla\theta^n=\frac{2}{\alpha-\lambda+1}(\theta^n)^{\frac{1+\lambda-\alpha}{2}}\kappa_0(\rho^n,\theta^n)\nabla(\theta^n)^{\frac{\alpha-\lambda+1}{2}}.~~~
\end{eqnarray}
Take $\lambda>0$ sufficiently small and $q>2$ satisfying $\frac{(\alpha+\lambda+1)q}{2}=\frac{5}{3}+\alpha-\lambda$. Then we have
\begin{eqnarray}\label{E4-8}
\int_0^T\|(\theta^n)^{\frac{1+\lambda-\alpha}{2}}\kappa_0(\rho^n,\theta^n)\|_{\textbf{L}^q(\Omega)}^qdt\leq C\int_0^T\int_{\Omega}(\theta^n)^{\frac{(\alpha+\lambda+1)q}{2}}dxds\leq M.
\end{eqnarray}
Using Vitali's theorem, (\ref{E4-1}), (\ref{E4-7}) and (\ref{E4-8}), we derive
\begin{eqnarray*}
(\theta^n)^{\frac{1+\lambda-\alpha}{2}}\kappa_0(\rho^n,\theta^n)\longrightarrow(\theta)^{\frac{1+\lambda-\alpha}{2}}\kappa_0(\rho,\theta)~~strongly~~in~~\textbf{L}^2(0,T;\textbf{L}^2(\Omega)).
\end{eqnarray*}
So by (\ref{E3-15''}), (\ref{E4-8R1})-(\ref{E4-8}), we get
\begin{eqnarray*}
\textbf{q}(\rho^n,\theta^n,\nabla\theta^n)\longrightarrow\textbf{q}(\rho,\theta,\nabla\theta)~~weakly~~in~~\textbf{L}^z((0,T)\times\Omega)~~\forall z\in(1,\frac{5+3\alpha}{4+3\alpha}).
\end{eqnarray*}
It follows from (\ref{E3-22}) and (\ref{E8-1}) that
\begin{eqnarray}\label{E4-7R}
\textbf{H}^n\longrightarrow\textbf{H}~~strongly~~in~~\textbf{L}^2((0,T)\times\Omega).
\end{eqnarray}
Furthermore, by (\ref{E3-19R1}) and (\ref{E4-7R}), we obtain
\begin{eqnarray}\label{E8-7}
\textbf{u}^n\times\textbf{H}^n\rightharpoonup\textbf{u}\times\textbf{H}&&weakly~~in~~\textbf{L}^2(0,T;\textbf{L}^{2}(\Omega)).
\end{eqnarray}
Finally, we study the convergence property of $\{\textbf{D}(\textbf{u}^n)\}$ by applying the Minty method (standard monotone operator technique).
The following integration by parts formula (the case $r\geq\frac{11}{5}$) can be obtained by a small modification of Lemma 4.1 in \cite{Freh2}. For reader's convenience, we give the proof.

We define
\begin{eqnarray*}
&&(\omega_h^+*z)(t,x):=\frac{1}{h}\int_0^hz(t+\tau,x)d\tau,~~(\omega_h^-*z)(t,x):=\frac{1}{h}\int^0_{-h}z(t+\tau,x)d\tau,\\
&&D^hz:=\frac{z(t+h,x)-z(t,x)}{h},~~D^{-h}z:=\frac{z(t,x)-z(t-h,x)}{h},
\end{eqnarray*}
where $z$ denotes any locally integrable function and $h>0$.

Then we have the following relationships:
\begin{eqnarray}\label{E8-8'}
(\omega_h^+*z)_t=D^h(z),~~(\omega_h^-*z)_t=D^{-h}(z),
\end{eqnarray}
\begin{eqnarray}\label{E8-8}
-\int_{t_0}^{t_1}f(\tau)D^hg(\tau)=\int_{t_0}^{t_1}D^{-h}f(\tau)g(\tau)d\tau&+&\frac{1}{h}\int_{t_0}^{t_0+h}f(\tau-h)g(\tau)d\tau\nonumber\\
&&-\frac{1}{h}\int_{t_1}^{t_1+h}f(\tau-h)g(\tau)d\tau.~~~
\end{eqnarray}
\begin{lemma}
Assume that $r\geq\frac{11}{5}$,
\begin{eqnarray*}
&&\rho\in\textbf{L}^{\infty}((0,T)\times\Omega)\cap\textbf{C}([0,T];\textbf{L}^q(\Omega)),~~\forall q\in[1,\infty),\\
&&\textbf{u}\in\textbf{L}^r(0,T;\textbf{W}_{0,\textbf{div}}^{1,r}(\Omega))\cap\textbf{L}^{\infty}(0,T;\textbf{L}^2(\Omega)^3),\\
&&\partial_t(\rho\textbf{u})\in(\textbf{L}^r(0,T;\textbf{W}_{0,\textbf{div}}^{1,r}(\Omega))^*,
\end{eqnarray*}
and the coupled $(\rho,\textbf{u})$ is a weak solution to $\rho_t+\textbf{div}(\rho\textbf{u})=0$, which means that for all $z\in\textbf{L}^{r}(0,T;\textbf{W}^{1,r}(\Omega))$, $z_t\in \textbf{L}^{1+\sigma}(0,T;\textbf{L}^{1+\sigma}(\Omega))$ and $\forall~t_0,t_1\in[0,T]$,
\begin{eqnarray}\label{E8-9}
\int_{t_0}^{t_1}(\rho(s),z_t(s))+(\rho(s)\textbf{u}(s),\nabla z(s))ds=(\rho(t_1),z(t_1))-(\rho(t_0),z(t_0)).
\end{eqnarray}
Then the following formula holds
\begin{eqnarray}\label{E4-10}
\int_{t_0}^{t_1}(\partial_t(\rho\textbf{u}),\textbf{u})-(\rho\textbf{u}\otimes\textbf{u},\nabla\textbf{u})dt=K(t_1)-K(t_0),~\forall~t_0,t_1\in[0,T],
\end{eqnarray}
where $K(t)$ is defined by
\begin{eqnarray*}
K(t):=\frac{1}{2}(\rho(t),|\textbf{u}(t)|^2)=\frac{1}{2}\int_{\Omega}\rho(t,x)|\textbf{u}(t,x)|^2dx=\frac{1}{2}\|(\sqrt{\rho}\textbf{u})(t)\|^2_{2}.
\end{eqnarray*}
\end{lemma}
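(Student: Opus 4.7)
The strategy is to give rigorous content to the formal energy identity. Adding $\partial_t(\rho\textbf{u})\cdot\textbf{u}$ to $\tfrac{1}{2}|\textbf{u}|^2\rho_t = -\tfrac{1}{2}|\textbf{u}|^2\textbf{div}(\rho\textbf{u})$ and using the divergence structure one gets, at the formal level,
\[
\partial_t(\rho\textbf{u})\cdot\textbf{u} - \rho\textbf{u}\otimes\textbf{u}:\nabla\textbf{u} = \tfrac{1}{2}\partial_t(\rho|\textbf{u}|^2),
\]
which yields (\ref{E4-10}) after integration over $\Omega\times(t_0,t_1)$. Since $\textbf{u}$ lies only in $\textbf{L}^r(0,T;\textbf{W}^{1,r}_{0,\textbf{div}})\cap\textbf{L}^\infty(0,T;\textbf{L}^2)$ and $\partial_t(\rho\textbf{u})$ is a functional rather than a function, this multiplication is not admissible and must be justified through time mollification.

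First, I would employ the one-sided Steklov averages $\omega_h^\pm *\textbf{u}$ defined in the statement. By standard properties of mollifiers they inherit the divergence-free condition, lie in $\textbf{L}^r(0,T;\textbf{W}^{1,r}_{0,\textbf{div}})$, and their time derivatives $D^{\pm h}\textbf{u}$ belong to $\textbf{L}^{1+\sigma}(\textbf{L}^{1+\sigma})$ thanks to (\ref{E8-8'}). Hence they are admissible as $z$ in the weak continuity equation (\ref{E8-9}) and as test functions in the weak momentum equation. Testing the momentum equation by $\omega_h^-*\textbf{u}$ and, in parallel, the continuity equation by $z=\tfrac12|\omega_h^+*\textbf{u}|^2$, and then applying the integration-by-parts formula (\ref{E8-8}) to shift the difference quotients, produces an approximate energy equality of the form
\[
\int_{t_0}^{t_1}\langle\partial_t(\rho\textbf{u}),\omega_h^-*\textbf{u}\rangle - \bigl(\rho\textbf{u}\otimes(\omega_h^-*\textbf{u}),\nabla(\omega_h^+*\textbf{u})\bigr)dt = \tfrac12\bigl(\rho,|\omega_h^+*\textbf{u}|^2\bigr)\Big|_{t_0}^{t_1} + \mathcal{R}_h,
\]
where $\mathcal{R}_h$ collects the two boundary correction terms appearing in (\ref{E8-8}).

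Next, I would pass to the limit $h\to 0^+$. Standard convergence of mollifiers gives $\omega_h^\pm *\textbf{u}\to\textbf{u}$ strongly in $\textbf{L}^r(0,T;\textbf{W}^{1,r}_{0,\textbf{div}})$, so by weak–strong pairing the left-hand side converges to $\int_{t_0}^{t_1}\langle\partial_t(\rho\textbf{u}),\textbf{u}\rangle - (\rho\textbf{u}\otimes\textbf{u},\nabla\textbf{u})dt$; integrability of the convective term is precisely the content of (\ref{E3-12})–(\ref{E3-12'}) and consumes the hypothesis $r>2$. For the right-hand side, the boundary values $\tfrac12(\rho,|\omega_h^\pm *\textbf{u}|^2)(t_j)$ converge to $K(t_j)$ at every Lebesgue point of the map $t\mapsto\textbf{u}(t)\in\textbf{L}^2(\Omega)$, while the remainder $\mathcal{R}_h$ vanishes for such $t_j$; continuity of $t\mapsto K(t)$ on the exceptional set is inherited from the weak-in-time continuity of $\sqrt{\rho}\textbf{u}$ established after (\ref{E4-4R1}).

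The principal obstacle is the Friedrichs-type commutator
\[
[\omega_h,\textbf{div}]\bigl(\rho\textbf{u}\otimes\textbf{u}\bigr) := \omega_h*\textbf{div}(\rho\textbf{u}\otimes\textbf{u}) - \textbf{div}\bigl(\rho\textbf{u}\otimes(\omega_h*\textbf{u})\bigr)
\]
that must be shown to tend to $0$ in $\textbf{L}^1((0,T)\times\Omega)$ when one moves the mollifier past $\rho\textbf{u}$ in order to identify $\int\langle\partial_t(\rho\textbf{u}),\omega_h^-*\textbf{u}\rangle dt$ with $\int(\omega_h^+ *\partial_t(\rho\textbf{u}))\cdot\textbf{u}\, dt$. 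In the classical DiPerna–Lions framework such a commutator is controlled under $\nabla\textbf{u}\in\textbf{L}^2$; here $r>2$ supplies exactly the integrability margin that makes the modification of \cite[Lemma 4.1]{Freh2} go through, so $r>2$ is sharp for this argument.
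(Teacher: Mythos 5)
Your proposal is correct and follows essentially the same route as the paper: Steklov time-averaging, the discrete integration-by-parts identity (\ref{E8-8}), insertion of $z=\tfrac{1}{2}|\omega_h^-*\textbf{u}|^2$ into the weak continuity equation (\ref{E8-9}) (admissible precisely because $r>2$ forces $\tfrac{p_1}{p_1-1}=r$, i.e.\ $p_1=\tfrac{r}{r-1}$), and passage to the limit $h\rightarrow 0^+$. The only difference in execution is that the paper pairs $\partial_t(\rho\textbf{u})$ with the doubly mollified $\omega_h^+*\omega_h^-*\textbf{u}$, so that after one time integration by parts the identity (\ref{E8-8}) moves the difference quotient onto $\rho\textbf{u}$ itself and no Friedrichs-type commutator estimate is required.
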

\begin{proof}
Define
\begin{eqnarray*}
L_h:=\int_{t_0}^{t_1}\langle(\rho\textbf{u})_t,\omega_h^+*\omega_h^-*\textbf{u}\rangle dt,
\end{eqnarray*}
where $0<t_0<t_1<T$ and $\forall h\in(0,\min\{T-t_1,t_0\})$.

Integration by parts on $(t_0,t_1)$, we have
\begin{eqnarray*}
L_h=-\int_{t_0}^{t_1}(\rho\textbf{u},D^h(\omega_n^-*\textbf{u}))dt&+&((\rho\textbf{u})(t_1),(\omega_h^+*\omega_h^-*\textbf{u})(t_1))\nonumber\\
&-&((\rho\textbf{u})(t_0),(\omega_h^+*\omega_h^-*\textbf{u})(t_0))dt.
\end{eqnarray*}
Thus to prove (\ref{E4-10}), using (\ref{E8-8}), it is equivalent to show that
\begin{eqnarray}\label{E8-11}
\lim_{h\rightarrow0^+}L_h&=&\int_{t_0}^{t_1}(\rho\textbf{u},\frac{1}{2}\nabla|\textbf{u}|^2)dt+[K(t_1)-K(t_0)]\nonumber\\
&=&-\lim_{h\rightarrow0^+}\int_{t_0}^{t_1}(\rho\textbf{u},D^h(\omega_h^-*\textbf{u}))dt+2[K(t_1)-K(t_0)]\nonumber\\
&=&\lim_{h\rightarrow0^+}\int_{t_0}^{t_1}(D^{-h}(\rho\textbf{u}),\omega_h^-*\textbf{u})dt.
\end{eqnarray}
By (\ref{E8-8'}) and $D^{-h}\rho=-\textbf{div}(\omega^{-}_{h}*(\rho\textbf{u}))$,
we derive
\begin{eqnarray*}
&&\int_{t_0}^{t_1}(D^{-h}(\rho\textbf{u}),\omega_h^-*\textbf{u})dt\nonumber\\
&=&\int_{t_0}^{t_1}(\rho D^{-h}\textbf{u},\omega_h^-*\textbf{u})dt
+\int_{t_0}^{t_1}(D^{-h}\rho)\textbf{u}(\cdot-h),\omega_h^-*\textbf{u})dt\nonumber\\
&=&\int_{t_0}^{t_1}(\rho,\frac{1}{2}|\omega_h^-*\textbf{u}|^2_t)dt+\int_{t_0}^{t_1}(\omega_h^-*(\rho\textbf{u}),\nabla(\textbf{u}(\cdot-h)\cdot(\omega_h^-*\textbf{u}))dt.
\end{eqnarray*}
Finally, we can take $z=\frac{1}{2}|\omega_h^-*\textbf{u}|^2$ in (\ref{E8-9}). Then we conclude that
\begin{eqnarray*}
&&\int_{t_0}^{t_1}(D^{-h}(\rho\textbf{u}),\omega_h^-*\textbf{u})dt\nonumber\\
&=&(\rho(t_1),\frac{1}{2}|\omega_h^-*\textbf{u}|^2(t_1))-
(\rho(t_0),\frac{1}{2}|\omega_h^-*\textbf{u}|^2(t_0))-\int_{t_0}^{t_1}(\rho\textbf{u},\frac{1}{2}|\omega_h^-*\textbf{u}|^2_t)dt\nonumber\\
&&+\int_{t_0}^{t_1}(\omega_h^-*(\rho\textbf{u}),\nabla(\textbf{u}(\cdot-h)\cdot(\omega_h^-*\textbf{u}))dt.
\end{eqnarray*}
Thus taking limit $h\rightarrow0$ in the above inequality, and recalling (\ref{E8-11}), we can obtain (\ref{E4-10}) for almost $t_0$ and $t_1$ in $(0,T)$.
\end{proof}
Using (\ref{E3-22}), (\ref{E3-22R}), (\ref{E4-2}), (\ref{E4-4R}), (\ref{E8-6})-(\ref{E8-1}) and (\ref{E8-7}), we take the limit $n\longrightarrow\infty$ in (\ref{E3-1R}) and (\ref{E3-1R2}) and get
\begin{eqnarray}\label{E4-9}
\int_0^T\langle(\rho\textbf{u})_t,\psi\rangle&-&(\rho\textbf{u}\otimes\textbf{u},\nabla\psi)+(\overline{\textbf{S}},\textbf{D}(\psi))\nonumber\\
&&+\int_{\Omega}((\textbf{H})^T\nabla\psi\textbf{H}+\frac{1}{2}\nabla(|\textbf{H}|^2)\cdot\psi)dxdt=0,~\forall\psi\in\textbf{W}_{0,\textbf{div}}^{1,r}(\Omega),~~~~~~
\end{eqnarray}
\begin{eqnarray}\label{E4-9R}
\int_0^T\langle\textbf{H}_t,b\rangle-(\nabla\times(\textbf{u}\times\textbf{H}),b)+(\nabla\times(\nu\nabla\times\textbf{H}),b)dt=0~~\forall b\in\textbf{W}^{1,2}_{0,\textbf{div}}(\Omega).~~~~~
\end{eqnarray}
Let $\chi_{(t_0,t_1)}(s)$ be the characteristic function of the interval $(t_0,t_1)$ and $\phi\in\textbf{W}^{1,r}_{0,\textbf{div}}(\Omega)$. We take the test function $\varphi(s,x)=\chi_{(t_0,t_1)}(s)\phi(x)$ in (\ref{E4-10}), then performing partial integration with respect to time in the first term, for $\forall\phi\in\textbf{W}_{0,\textbf{div}}^{1,r}(\Omega)$, we get
\begin{eqnarray*}
(\rho(t)\textbf{u}(t),\phi)&-&(\rho_0\textbf{u}_0,\phi)\nonumber\\
&=&\int_0^t(\rho\textbf{u}\otimes\textbf{u},\nabla\phi)-(\overline{\textbf{S}},\textbf{D}(\phi))-(\textbf{H})^T\nabla\phi\textbf{H}-\frac{1}{2}\nabla(|\textbf{H}|^2)\cdot\phi dt,
\end{eqnarray*}
which implies
\begin{eqnarray}\label{E4-11}
\lim_{t\longrightarrow0^+}(\rho(t)\textbf{u}(t)-\rho_0\textbf{u}_0,\phi)=0~~\forall\phi\in\textbf{L}_{\textbf{div}}^{2}(\Omega).
\end{eqnarray}
By (\ref{E4-4R1}), letting $n\longrightarrow\infty$ in (\ref{E3-4}), we have
\begin{eqnarray}\label{E4-12}
(\rho,|\textbf{u}|^2)(t)\leq(\rho_0,|\textbf{u}_0|^2)~~for~a.a.~t\in[0,T].
\end{eqnarray}
It follows from (\ref{E4-2R}), (\ref{E4-11})-(\ref{E4-12}) that
\begin{eqnarray*}
\lim_{t\longrightarrow0^+}\|\sqrt{\rho(t)}(\textbf{u}(t)-\textbf{u}_0)\|^2_{\textbf{L}^2(\Omega)}
=\lim_{t\longrightarrow0^+}\left((\rho,|\textbf{u}|^2)(t)-2((\rho\textbf{u})(t),\textbf{u}_0)+(\rho(t),|\textbf{u}_0|^2)\right)=0.
\end{eqnarray*}
This together with (\ref{E4-1R}) implies that
\begin{eqnarray*}
\lim_{t\longrightarrow0^+}(\rho,|\textbf{u}|^2)=(\rho_0,|\textbf{u}_0|^2).
\end{eqnarray*}
Then by Lemma 3, there exists $T_{\sigma}\in(0,T]$ such that
\begin{eqnarray}\label{E4-13}
\int_0^{T_{\sigma}}\langle\partial_t(\rho\textbf{u}),\textbf{u}\rangle-(\rho\textbf{u}\otimes\textbf{u},\textbf{u})dt
=\frac{1}{2}(\rho,|\textbf{u}|^2)(T_{\sigma})-\frac{1}{2}(\rho_0,|\textbf{u}_0|^2),
\end{eqnarray}
where $T_{\sigma}\longrightarrow T$ as $\sigma\longrightarrow0$.

Taking $\psi=\chi_{(t_0,t_1)}\textbf{u}$ and $b=\chi_{(t_0,t_1)}\textbf{H}$ in (\ref{E4-9}) and (\ref{E4-9R}), by (\ref{E4-13}), we derive
\begin{eqnarray}\label{E4-14}
\frac{1}{2}(\rho,|\textbf{u}|^2)(T_{\sigma})+\int_0^{T_{\sigma}}\left((\overline{\textbf{S}},\textbf{D}(\textbf{u}))+\int_{\Omega}((\textbf{H})^T\nabla\textbf{u}\textbf{H}+\frac{1}{2}\nabla(|\textbf{H}|^2)\cdot\textbf{u})dx \right)dt=\frac{1}{2}(\rho_0,|\textbf{u}_0|^2),~~~~
\end{eqnarray}
and
\begin{eqnarray}\label{E4-14R}
\frac{1}{2}\|\textbf{H}(T_{\sigma})\|^2_{\textbf{L}^2(\Omega)}+\int_0^{T_{\sigma}}(\nu\|\nabla\times\textbf{H}\|_{\textbf{L}^2(\Omega)}^2&-&\int_{\Omega}((\textbf{H})^T\nabla\textbf{u}\textbf{H}-\frac{1}{2}\nabla(|\textbf{H}|^2)\cdot\textbf{u})dx)dt\nonumber\\
&=&\frac{1}{2}\|\textbf{H}_0\|_{\textbf{L}^2(\Omega)}^2.
\end{eqnarray}
Summing up (\ref{E4-14}) and (\ref{E4-14R}), we obtain
\begin{eqnarray}\label{E4-14R1}
\frac{1}{2}((\rho,|\textbf{u}|^2)(T_{\sigma})+\|\textbf{H}(T_{\sigma})\|^2_{\textbf{L}^2(\Omega)})&+&\int_0^{T_{\sigma}}\left((\overline{\textbf{S}},\textbf{D}(\textbf{u}))+\nu\|\nabla\times\textbf{H}\|_{\textbf{L}^2(\Omega)}^2 \right)dt\nonumber\\
&=&\frac{1}{2}((\rho_0,|\textbf{u}_0|^2)+\|\textbf{H}_0\|_{\textbf{L}^2(\Omega)}^2).
\end{eqnarray}
By (\ref{E1-1'}), for $\textbf{B}\in\textbf{L}^r((0,T)\times\Omega)^{3\times3}$ and $\textbf{B}=\textbf{B}^T$, we have
\begin{eqnarray*}
\int_0^{T_{\sigma}}(\textbf{S}(\rho^n,\theta^n,\textbf{D}(\textbf{u}^n))-\textbf{S}(\rho^n,\theta^n,\textbf{B}),\textbf{D}(\textbf{u}^n)-\textbf{B})dt\geq0.
\end{eqnarray*}
Using (\ref{E3-7}), we can rewrite the above inequality as
\begin{eqnarray}\label{E4-15}
0\leq\frac{1}{2}((\rho_0,|\Gamma^n\textbf{u}_0|^2)&-&(\rho^n,|\textbf{u}^n|^2)(T_{\sigma}))+\frac{1}{2}(\|\textbf{H}^n(T_{\sigma})\|^2_{\textbf{L}^2(\Omega)}-\|\textbf{H}^n_0\|^2_{\textbf{L}^2(\Omega)})
\nonumber\\
&&+\nu\int_0^{T_{\sigma}}\|\nabla\times\textbf{H}^n\|^2dt
-\int_0^{T_{\sigma}}(\textbf{S}(\rho^n,\theta^n,\textbf{D}(\textbf{u}^n)), \textbf{B})dt\nonumber\\
&&-\int_0^{T_{\sigma}}(\textbf{S}(\rho^n,\theta^n,\textbf{B}),\textbf{D}(\textbf{u}^n)-\textbf{B})dt.
\end{eqnarray}
Let $n\longrightarrow\infty$ in (\ref{E4-15}). By (\ref{E1-1'}), (\ref{E2-1R1})-(\ref{E2-1R2}), (\ref{E3-19R1}), (\ref{E3-22R}), (\ref{E4-1}), (\ref{E4-7}), (\ref{E4-7R}) and the Lebesgue dominated convergence theorem, we derive
\begin{eqnarray*}
\frac{1}{2}((\rho_0,|\textbf{u}_0|^2)&-&(\rho,|\textbf{u}|^2)(T_{\sigma}))+\frac{1}{2}(\|\textbf{H}(T_{\sigma})\|^2_{\textbf{L}^2(\Omega)}-\|\textbf{H}_0\|^2_{\textbf{L}^2(\Omega)})
+\nu\int_0^{T_{\sigma}}\|\nabla\times\textbf{H}\|_{\textbf{L}^2(\Omega)}^2dt\nonumber\\
&&-\int_0^{T_{\sigma}}(\overline{\textbf{S}},\textbf{B})dt-\int_0^{T_{\sigma}}(\textbf{S}(\rho,\theta,\textbf{B}),\textbf{D}(\textbf{u})-\textbf{B})dt\geq0,
\end{eqnarray*}
which together with (\ref{E4-14R1}) and letting $\sigma\longrightarrow0^+$ implies
\begin{eqnarray*}
\int_0^{T}(\overline{\textbf{S}}-\textbf{S}(\rho,\theta,\textbf{B}),\textbf{D}(\textbf{u})-\textbf{B})dt\geq0~~\forall\textbf{B}\in\textbf{L}^r((0,T)\times\Omega)^{3\times3},~~\textbf{B}=\textbf{B}^T.
\end{eqnarray*}
Taking $\textbf{B}=\textbf{D}(\textbf{u})\pm\tau\textbf{C}$ with any symmetric matrix $\textbf{C}\in\textbf{L}^r((0,T)\times\Omega)^{3\times3}$ and $\tau>0$, then using Minty's method, we have
\begin{eqnarray*}
\int_0^{T}(\overline{\textbf{S}}-\textbf{S}(\rho,\theta,\textbf{D}(\textbf{u})),\textbf{C})dt=0,
\end{eqnarray*}
which means that
\begin{eqnarray}\label{E4-16}
\overline{\textbf{S}}(t,x)=\textbf{S}(\rho,\theta,\textbf{D}(\textbf{u}))(t,x)~~for~a.a.~(t,x)\in(0,T)\times\Omega.
\end{eqnarray}
This together with (\ref{E4-7R}), (\ref{E4-9}) shows that (\ref{E2-R1}) is satisfied for $\rho$, $\textbf{u}$ and $\textbf{H}$.

Next we show that (\ref{E2-R2}) holds. It follows from (\ref{E2-1R1}), (\ref{E3-7}), (\ref{E4-4R1}), (\ref{E4-14R1}) and (\ref{E4-16}) that
\begin{eqnarray*}
&&\lim_{n\longrightarrow\infty}\int_0^T\left(\textbf{S}(\rho^n,\theta^n,\textbf{D}(\textbf{u}^n)),\textbf{D}(\textbf{u}^n))-\textbf{S}(\rho,\theta,\textbf{D}(\textbf{u})),\textbf{D}(\textbf{u}))\right)dt\nonumber\\
&=&\frac{1}{2}\lim_{n\longrightarrow\infty}\lim_{T_{\sigma}\longrightarrow T}\left((\rho,|\textbf{u}|^2)(T_{\sigma})-(\rho^n,|\textbf{u}^n|^2)(T_{\sigma})-(\rho_0,|\textbf{u}_0|^2)+(\rho,|\Gamma^n\textbf{u}_0|^2)\right)=0.
\end{eqnarray*}
By (\ref{E1-1'}), (\ref{E3-22R}), (\ref{E4-1}), (\ref{E4-1'}), (\ref{E4-7}) and (\ref{E4-7R}), using the Lebesgue dominated convergence theorem, we have
\begin{eqnarray*}
\lim_{n\longrightarrow\infty}\int_0^T\left(\textbf{S}(\rho^n,\theta^n,\textbf{D}(\textbf{u}))-\textbf{S}(\rho,\theta,\textbf{D}(\textbf{u})),\textbf{D}(\textbf{u}^n-\textbf{u})\right)dt=0,
\end{eqnarray*}
\begin{eqnarray*}
\lim_{n\longrightarrow\infty}\int_0^T\left(\textbf{S}(\rho,\theta,\textbf{D}(\textbf{u})),\textbf{D}(\textbf{u}^n-\textbf{u})\right)dt=0,
\end{eqnarray*}
\begin{eqnarray*}
\lim_{n\longrightarrow\infty}\int_0^T\left(\textbf{S}(\rho^n,\theta^n,\textbf{D}(\textbf{u}^n))-\textbf{S}(\rho,\theta,\textbf{D}(\textbf{u})),\textbf{D}(\textbf{u})\right)dt=0.
\end{eqnarray*}
Above four convergence results show that
\begin{eqnarray*}
\lim_{n\longrightarrow\infty}\int_0^T\left(\textbf{S}(\rho^n,\theta^n,\textbf{D}(\textbf{u}^n))-\textbf{S}(\rho^n,\theta^n,\textbf{D}(\textbf{u})),\textbf{D}(\textbf{u}^n-\textbf{u})\right)dt=0,
\end{eqnarray*}
which combing with (\ref{E1-1'}) shows that
\begin{eqnarray*}
0\leq(\textbf{S}(\rho^n,\theta^n,\textbf{D}(\textbf{u}^n))-\textbf{S}(\rho^n,\theta^n,\textbf{D}(\textbf{u})))\cdot\textbf{D}(\textbf{u}^n-\textbf{u})\longrightarrow0~~strongly~~in~~\textbf{L}^1((0,T)\times\Omega).
\end{eqnarray*}
Consequently, for any $h\in\textbf{L}^{\infty}((0,T)\times\Omega)$, we get
\begin{eqnarray*}
0&=&\lim_{n\longrightarrow\infty}\int_{(0,T)\times\Omega}\textbf{S}(\rho^n,\theta^n,\textbf{D}(\textbf{u}^n))-\textbf{S}(\rho^n,\theta^n,\textbf{D}(\textbf{u})))\cdot\textbf{D}(\textbf{u}^n-\textbf{u})h dxdt\nonumber\\
&=&\lim_{n\longrightarrow\infty}\int_{(0,T)\times\Omega}\textbf{S}(\rho^n,\theta^n,\textbf{D}(\textbf{u}^n))\cdot\textbf{D}(\textbf{u}^n)hdxdt\nonumber\\
&&+\lim_{n\longrightarrow\infty}\int_{(0,T)\times\Omega}\textbf{S}(\rho^n,\theta^n,\textbf{D}(\textbf{u}))\cdot\textbf{D}(\textbf{u})hdxdt\nonumber\\
&&-\lim_{n\longrightarrow\infty}\int_{(0,T)\times\Omega}\textbf{S}(\rho^n,\theta^n,\textbf{D}(\textbf{u}))\cdot\textbf{D}(\textbf{u}^n)hdxdt\nonumber\\
&&-\lim_{n\longrightarrow\infty}\int_{(0,T)\times\Omega}\textbf{S}(\rho^n,\theta^n,\textbf{D}(\textbf{u}^n))\cdot\textbf{D}(\textbf{u})hdxdt,
\end{eqnarray*}
which together with (\ref{E3-22R}), (\ref{E4-1}), (\ref{E4-1'}), (\ref{E4-7}) and (\ref{E4-7R}), the Lebesgue dominated convergence theorem implies
\begin{eqnarray*}
&&\lim_{n\longrightarrow\infty}\int_{(0,T)\times\Omega}\textbf{S}(\rho^n,\theta^n,\textbf{D}(\textbf{u}^n))\cdot\textbf{D}(\textbf{u}^n)hdxdt\nonumber\\
&&=\lim_{n\longrightarrow\infty}\int_{(0,T)\times\Omega}\textbf{S}(\rho,\theta,\textbf{D}(\textbf{u}))\cdot\textbf{D}(\textbf{u})hdxdt.
\end{eqnarray*}
Thus it follows from the above convergence properties, (\ref{E4-7R2}) and letting $n\longrightarrow\infty$ in (\ref{E3-1R1}) that (\ref{E2-R2}) holds.

\section{Appendix-Existence of solutions for the approximation system}
In this section, we establish the existence of solutions to the approximation problem (\ref{E1-1})-(\ref{E1-2}), (\ref{E1-2R}) and (\ref{E1-4}) for any fixed $n\in\textbf{N}$. We introduce a two-level approximation depending on the parameters $\epsilon>0$ and $k\in\textbf{N}$, then study the behavior of the system for $\epsilon\longrightarrow0$ and $k\longrightarrow\infty$.

Let $\{\omega_j\}_{j=1}^{\infty}$ and $\{\varpi_j\}_{j=1}^{\infty}$ be a smooth basis of $\textbf{W}^{1,2}(\Omega)$ orthonormal in the space $\textbf{L}^2(\Omega)$
such that $(\omega_i,\omega_j)=\delta_{i,j}$ and $(\varpi_i,\varpi_j)=\delta_{i,j}$. For $\epsilon>0$ and fixed $k\in\textbf{N}$ we seek approximation solutions $(\rho^{k,\epsilon},\textbf{u}^{k,\epsilon},\theta^{k,\epsilon},\textbf{H}^{k,\epsilon})$ where $(\textbf{u}^{k,\epsilon},\theta^{k,\epsilon},\textbf{H}^{k,\epsilon})$ has the form
\begin{eqnarray*}
\textbf{u}^{k,\epsilon}=\sum_{j=1}^ka_j^{k,\epsilon}\psi_j,~~\theta^{k,\epsilon}=\sum_{j=1}^kb_j^{k,\epsilon}\omega_j,~~\textbf{H}^{k,\epsilon}=\sum_{j=1}^kc_j^{k,\epsilon}\varpi_j,
\end{eqnarray*}
and solve the following approximation system (for all $1\leq j\leq k$ and a.e. $t\in(0,T)$)
\begin{eqnarray}\label{E6-1}
\rho^{k,\epsilon}_t+\textbf{div}(\rho^{k,\epsilon}\textbf{u}^{k,\epsilon})-\epsilon\triangle\rho^{k,\epsilon}=0,~~on~~[0,T]\times\Omega,
\end{eqnarray}
\begin{eqnarray}\label{E6-2}
(\rho^{k,\epsilon}\frac{d}{dt}\textbf{u}^{k,\epsilon},\psi_j)+(\rho^{k,\epsilon}[\textbf{u}^{k,\epsilon}]\textbf{u}^{k,\epsilon},\psi_j)&+&(\textbf{S}_{k,\epsilon},\textbf{D}(\psi_j))-\epsilon(\nabla\rho^{k,\epsilon},[\nabla\textbf{u}^{k,\epsilon}]\psi_j)\nonumber\\
&=&((\nabla\times\textbf{H}^{k,\epsilon})\times\textbf{H}^{k,\epsilon},\psi_j),
\end{eqnarray}
\begin{eqnarray}\label{E6-3}
(\rho^{k,\epsilon}\frac{d}{dt}Q(\theta^{k,\epsilon}),\omega_j)&+&(\rho^{k,\epsilon}Q(\theta^{k,\epsilon})\textbf{u}^{k,\epsilon},\omega_j)+(\textbf{q}(\rho_{k,\epsilon},\theta^{k,\epsilon},\nabla\theta^{k,\epsilon}),\nabla \omega_j))\nonumber\\
&=&(|\nabla\times\textbf{H}^{k,\epsilon}|^2,\omega_j)+(\textbf{S}_{k,\epsilon},\textbf{D}(\textbf{u}^{k,\epsilon})\omega_j),
\end{eqnarray}
\begin{eqnarray}\label{E6-4}
(\frac{d}{dt}\textbf{H}^{k,\epsilon},\varpi_j)+\nu(\textbf{curl}\textbf{H}^{k,\epsilon},\textbf{curl}\varpi_j)
-((\textbf{u}^{k,\epsilon}\times\textbf{H}^{k,\epsilon},\textbf{curl}\varpi_j)=0,
\end{eqnarray}
with
\begin{eqnarray*}
&&\rho^{k,\epsilon}\cdot\textbf{n}=0~~on~~[0,T],~~\rho_*\leq\rho^{k,\epsilon}\leq\rho^*,~~\rho^{k,\epsilon}(0,\cdot)=\rho_0~~in~~(0,T)\times\Omega,\\
&&\textbf{u}^{k,\epsilon}(0,\cdot)=\textbf{u}^{k,\epsilon}_0=\Gamma^k\textbf{u}_0,~~\theta^{k,\epsilon}(0,\cdot)=\theta^{k,\epsilon}_0=\Gamma^k\theta_0,
~~\textbf{H}^{k,\epsilon}(0,\cdot)=\textbf{H}^{k,\epsilon}_0=\Gamma^k\textbf{H}_0,
\end{eqnarray*}
where
\begin{eqnarray*}
\textbf{S}_{k,\epsilon}:=\textbf{S}(\rho^{k,\epsilon},\theta_{max}^{k,\epsilon},\textbf{D}^{k,\epsilon})~~with~~\theta_{max}^{k,\epsilon}:=\max\{\theta^{k,\epsilon},0\}.
\end{eqnarray*}
The existence of solutions for the approximation system (\ref{E6-1})-(\ref{E6-4}) can be proved by modified Faedo-Galerkin method. We first give the solvable of induction equation (\ref{E6-4}).
\begin{lemma}
Assume that the initial data $\textbf{H}^{k,\epsilon}(0)\in\textbf{Y}^k$ and
given a velocity field
$\textbf{u}^{k,\epsilon}\in\textbf{C}([0,T],\textbf{X}^k)$. The system
(\ref{E6-4}) has a solution
$\textbf{H}^{k,\epsilon}(x,t)\in\textbf{C}^1([0,T];\textbf{W}^{1,2})$. Moreover, the
operator $\textbf{u}^{k,\epsilon}\rightarrow\textbf{H}^{k,\epsilon}(\textbf{u}^{k,\epsilon})$ maps
bounded sets in $\textbf{C}([0,T],\textbf{X}^k)$ into bounded
subsets of $\textbf{Y}^k$, and the solution operator is continuous
operator.
\end{lemma}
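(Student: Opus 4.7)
The plan is to reduce Lemma 4 to a classical Carath\'eodory question on the coefficient vector $c^{k,\epsilon}(t)=(c_1^{k,\epsilon}(t),\ldots,c_k^{k,\epsilon}(t))$ for the ansatz $\textbf{H}^{k,\epsilon}(t,x)=\sum_{j=1}^{k}c_j^{k,\epsilon}(t)\varpi_j(x)$. Inserting this expression into (\ref{E6-4}) and using the $\textbf{L}^2$-orthonormality $(\varpi_i,\varpi_j)=\delta_{ij}$ produces, for each given $\textbf{u}^{k,\epsilon}\in\textbf{C}([0,T],\textbf{X}^k)$, a linear inhomogeneous ODE system
\begin{equation*}
\frac{d}{dt}c_i^{k,\epsilon}(t)+\nu\sum_{j=1}^{k}A_{ij}\,c_j^{k,\epsilon}(t)=\sum_{j=1}^{k}B_{ij}(t)\,c_j^{k,\epsilon}(t),\qquad i=1,\ldots,k,
\end{equation*}
with constant symmetric matrix $A_{ij}=(\textbf{curl}\,\varpi_j,\textbf{curl}\,\varpi_i)$ and continuous coefficient matrix $B_{ij}(t)=(\textbf{u}^{k,\epsilon}(t)\times\varpi_j,\textbf{curl}\,\varpi_i)$. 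Writing $c^{k,\epsilon}(0)$ for the components of the projection of $\textbf{H}^{k,\epsilon}(0)$ onto $\textrm{span}\{\varpi_j\}_{j=1}^{k}$, classical linear ODE theory immediately yields a unique global solution $c^{k,\epsilon}\in\textbf{C}^1([0,T])^k$, and the smoothness of the basis $\{\varpi_j\}$ then gives $\textbf{H}^{k,\epsilon}\in\textbf{C}^1([0,T];\textbf{W}^{1,2}(\Omega))$.

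For boundedness of the mapping $\textbf{u}^{k,\epsilon}\mapsto\textbf{H}^{k,\epsilon}(\textbf{u}^{k,\epsilon})$, I would test (\ref{E6-4}) against $\textbf{H}^{k,\epsilon}$ itself (admissible since it is a finite linear combination of the $\varpi_j$), obtaining
\begin{equation*}
\frac{1}{2}\frac{d}{dt}\|\textbf{H}^{k,\epsilon}\|_{\textbf{L}^2(\Omega)}^2+\nu\|\textbf{curl}\,\textbf{H}^{k,\epsilon}\|_{\textbf{L}^2(\Omega)}^2=(\textbf{u}^{k,\epsilon}\times\textbf{H}^{k,\epsilon},\textbf{curl}\,\textbf{H}^{k,\epsilon}).
\end{equation*}
Since $\textbf{X}^k$ is finite-dimensional, all norms are equivalent and $\|\textbf{u}^{k,\epsilon}(t)\|_{\textbf{L}^{\infty}(\Omega)}\le M_k\|\textbf{u}^{k,\epsilon}\|_{\textbf{C}([0,T];\textbf{X}^k)}$. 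Cauchy--Schwarz and Young's inequality then absorb the $\textbf{curl}\,\textbf{H}^{k,\epsilon}$ factor into the dissipation, and Gronwall's inequality yields uniform bounds on $\|\textbf{H}^{k,\epsilon}\|_{\textbf{L}^{\infty}(0,T;\textbf{L}^2(\Omega))}$ and $\|\textbf{curl}\,\textbf{H}^{k,\epsilon}\|_{\textbf{L}^2(0,T;\textbf{L}^2(\Omega))}$ depending only on $\|\textbf{H}^{k,\epsilon}(0)\|_{\textbf{Y}^k}$ and on $\|\textbf{u}^{k,\epsilon}\|_{\textbf{C}([0,T];\textbf{X}^k)}$. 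Using the ODE to control $\partial_t c^{k,\epsilon}$ componentwise and exploiting norm equivalence on the finite-dimensional space $\textbf{Y}^k$ upgrades this to the required $\textbf{Y}^k$-bound.

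Continuity of the solution operator then follows from a parallel energy estimate for the difference. Given $\textbf{u}_1^{k,\epsilon},\textbf{u}_2^{k,\epsilon}\in\textbf{C}([0,T],\textbf{X}^k)$ with corresponding solutions $\textbf{H}_1^{k,\epsilon},\textbf{H}_2^{k,\epsilon}$, the difference $\textbf{G}:=\textbf{H}_1^{k,\epsilon}-\textbf{H}_2^{k,\epsilon}$ solves an equation of the same type forced by $(\textbf{u}_1^{k,\epsilon}-\textbf{u}_2^{k,\epsilon})\times\textbf{H}_2^{k,\epsilon}$. Testing with $\textbf{G}$, invoking the a priori bound on $\textbf{H}_2^{k,\epsilon}$ from the previous step, and applying Gronwall produces the Lipschitz estimate
\begin{equation*}
\|\textbf{H}_1^{k,\epsilon}-\textbf{H}_2^{k,\epsilon}\|_{\textbf{Y}^k}\le C_k\,\|\textbf{u}_1^{k,\epsilon}-\textbf{u}_2^{k,\epsilon}\|_{\textbf{C}([0,T];\textbf{X}^k)},
\end{equation*}
which is exactly the claimed continuity. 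Because of the linear dependence on $\textbf{H}^{k,\epsilon}$ and the finite-dimensional reduction, no step poses a genuine obstacle at this level; the one delicate point is that the constant $C_k$ depends on $k$ (and ultimately on $\epsilon$) and blows up as $k\to\infty$, so it must not be used when passing to the limit~---~that limit is handled instead by the uniform estimates already derived in Section~3.
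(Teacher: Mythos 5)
Your proposal follows essentially the same route as the paper: reduce (\ref{E6-4}) to a system of ODEs for the Galerkin coefficients $c_j^{k,\epsilon}$, obtain an $\textbf{L}^2$ energy identity by testing with $\textbf{H}^{k,\epsilon}$, absorb the transport term using Young's inequality and the boundedness of $\textbf{u}^{k,\epsilon}$ on the finite-dimensional space $\textbf{X}^k$, and close with Gronwall; continuity of $\textbf{u}^{k,\epsilon}\mapsto\textbf{H}^{k,\epsilon}$ comes from the analogous estimate for differences. Where you differ is in the existence step: you observe that the coefficient system is a \emph{linear} ODE with continuous coefficients, so global existence and uniqueness on all of $[0,T]$ are immediate from classical linear theory, whereas the paper first gets a local solution on $[0,T']$, extends it to $T'=T$ via the a priori bound, and then inserts a fixed-point argument for a map $\Pi(\textbf{H}^{k,\epsilon}(0))=\textbf{H}^{k,\epsilon}(T)$ on a ball $B_R$ --- a step that is superfluous for the initial-value problem (it is the kind of argument one needs for time-periodic solutions, and nothing in the statement of Lemma 4 requires it). Your version also makes the continuity explicit as a Lipschitz estimate rather than deferring to the references, and your closing caveat that the constants degenerate as $k\to\infty$ and therefore cannot be used in the limit passage is exactly the right thing to flag. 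In short, your argument is correct and, if anything, tighter than the paper's.
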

\begin{proof}
Define
\begin{eqnarray*}
\textbf{X}^k:=\{\textbf{u}^{k,\epsilon}|\textbf{u}^{k,\epsilon}(x,t)=\sum_{j=1}^ka_j^{k,\epsilon}(t)\psi_j(x)\},~~
\textbf{Y}^k:=\{\textbf{H}^{k,\epsilon}|\textbf{H}^{k,\epsilon}(x,t)=\sum_{j=1}^kc_j^{k,\epsilon}(t)\varpi_j(x)\}.
\end{eqnarray*}
Note that $\textbf{H}^{k,\epsilon}(t,x)\in\textbf{Y}^k$. We can write
\begin{eqnarray*}
\textbf{H}^{k,\epsilon}=\sum_{j=1}^kc_j^{k,\epsilon}(t)\varpi_j(x),
\end{eqnarray*}
where the coefficients $c_j^{k,\epsilon}(t)$ are required to solve the
system of ordinary differential equations
\begin{eqnarray}\label{ER3-6}
\frac{dc_j^{k,\epsilon}}{dt}+\sum_{|j|\leq
k}A_{i,j}(t)c_j^{k,\epsilon}=0,~~|j|\leq k,
\end{eqnarray}
where
\begin{eqnarray*}\label{ER3-7}
A_{j,k}(t)&=&\nu(\nabla\varpi_i,\nabla\varpi_j)-((\textbf{u}^{k,\epsilon}\cdot\nabla)\varpi_i,\varpi_j)
-((\varpi_i\cdot\nabla)\textbf{u}^{k,\epsilon},\varpi_j)\nonumber\\
&&+((\textbf{div}\textbf{u}^{k,\epsilon})\varpi_i,\varpi_j).
\end{eqnarray*}
For given initial data
$\textbf{H}^{k,\epsilon}(0)\in\textbf{Y}^k$, the system (\ref{ER3-6}) has a
unique solution $c_j^{k,\epsilon}\in\textbf{C}^1((0,T);\textbf{Y}^k)$
for some $T'\leq T$. Multiplying both sides (\ref{ER3-6}) by
$c_j^{k,\epsilon}$, summing over $j$, integrating by parts, we have
\begin{eqnarray}\label{ER3-8}
\frac{d}{dt}\|\textbf{H}^{k,\epsilon}\|^2_{\textbf{L}^2(\Omega)}+2\nu\|\nabla\textbf{H}^{k,\epsilon}\|^2_{\textbf{L}^2(\Omega)}
&=&-2(\textbf{H}^{k,\epsilon}\cdot\nabla\textbf{u}^{k,\epsilon},\textbf{H}^{k,\epsilon}).\nonumber
\end{eqnarray}
Note that by Young inequality,
\begin{eqnarray}\label{ER3-9}
-(\textbf{H}_n\cdot\nabla\textbf{u}^{k,\epsilon},\textbf{H}^{k,\epsilon})\leq\frac{C\nu}{2}\|\textbf{H}^{k,\epsilon}\|^2_{\textbf{L}^2(\Omega)}+\frac{2}{C\nu}\|\nabla\textbf{u}^{k,\epsilon}\|^2_{\textbf{L}^2(\Omega)},
\end{eqnarray}
Using the Poincar\'{e} inequality,
\begin{eqnarray}\label{ER3-11}
\|\nabla\textbf{H}^{k,\epsilon}\|_{\textbf{L}^2(\Omega)}\geq
C\|\textbf{H}^{k,\epsilon}\|_{\textbf{L}^2(\Omega)}.
\end{eqnarray}
By (\ref{ER3-8})-(\ref{ER3-11}), we derive
\begin{eqnarray*}
\frac{d}{dt}\|\textbf{H}^{k,\epsilon}\|^2_{\textbf{L}^2(\Omega)}+C\nu\|\textbf{H}^{k,\epsilon}\|^2_{\textbf{L}^2(\Omega)}
\leq\frac{2}{C\nu}\|\nabla\textbf{u}^{k,\epsilon}\|_{\textbf{L}^2(\Omega)}.
\end{eqnarray*}
Thus, for $t\in[0,T]$, we get
\begin{eqnarray}\label{ER3-12}
\|\textbf{H}^{k,\epsilon}\|^2_{\textbf{L}^2(\Omega)}&\leq&\|\textbf{H}^{k,\epsilon}(0)\|^2_{\textbf{L}^2(\Omega)}e^{-C\nu t}+\frac{2}{C\nu}\int_0^te^{-C\nu(t-s)}\|\nabla\textbf{u}^{k,\epsilon}\|_{\textbf{L}^2(\Omega)}ds.~~~~~~~
\end{eqnarray}
This implies that for $t\in[0,T]$,
\begin{eqnarray*}
\|\textbf{H}^{k,\epsilon}(t)\|^2_{\textbf{L}^2(\Omega)}
\leq\|\textbf{H}^{k,\epsilon}(0)\|^2_{\textbf{L}^2(\Omega)}e^{-C\nu t}+\frac{2}{C\nu}\int_0^{\omega}e^{-C\nu(t-s)}\|\nabla\textbf{u}^{k,\epsilon}\|_{\textbf{L}^2(\Omega)}ds.
\end{eqnarray*}
Due to $\{\varpi_j(x)\}_{j=1}^{\infty}$ be an orthonormal basis of
$\textbf{W}^{1,2}(\Omega)$, so we have
$|c^{k,\epsilon}_j(t)|=\|\textbf{H}^{k,\epsilon}(t)\|^2_{\textbf{L}^2(\Omega)}$,
from which we conclude that $T'=T$.

Define the ball $B_{R}$ of radius $R$ and the map
$\Pi:B_{R}\longrightarrow B_{R}$ such that
$\Pi(\textbf{H}^{k,\epsilon}(0))=\textbf{H}^{k,\epsilon}(T)$, where the radius $R$
such that
\begin{eqnarray*}
R\geq\left(\frac{\frac{2}{C\nu}\int_0^{T}e^{-C\nu(t-s)}\|\nabla\textbf{u}^{k,\epsilon}\|_{\textbf{L}^2(T)}ds}{1-e^{-C\nu T}}\right)^{\frac{1}{2}}.
\end{eqnarray*}
Follows \cite{Lions1}, we can prove the map
$\Pi$ is continuous. Hence, it has a fixed point. Moreover, from
(\ref{ER3-12}), we know that the solution operator
$\textbf{u}^{k,\epsilon}\longrightarrow\textbf{H}^{k,\epsilon}(\textbf{u}^{k,\epsilon})$ maps bounded
sets in $\textbf{C}([0,T],\textbf{X}^k)$ into bounded subsets of the
set $\textbf{Y}^k$. Then, as done in \cite{Hu2}, the solution
operator $\textbf{u}^{k,\epsilon}\longrightarrow\textbf{H}^{k,\epsilon}(\textbf{u}^{k,\epsilon})$ is
a continuity operator. This completes the proof.
\end{proof}
The rest process of proof is similar to Proposition 7.2. in \cite{Fei1} or Lemma 3.2. in \cite{Hu2}. Thus, combining with Lemma 4, we obtain the following result:
\begin{lemma}
Under the assumption in Theorem 1, for fixed $k\in\textbf{N}$ and $\epsilon>0$, the approximation problem (\ref{E6-1})-(\ref{E6-4}) has a solution $(\rho^{k,\epsilon},\textbf{u}^{k,\epsilon},\theta^{k,\epsilon},\textbf{H}^{k,\epsilon})$ on $(0,T)\times\Omega$ for any fixed $T>0$.
\end{lemma}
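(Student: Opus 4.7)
The plan is to prove Lemma 5 by an iterative Schauder fixed-point argument on the velocity, in the spirit of Propositions 7.2 in \cite{Fei1} and Lemma 3.2 in \cite{Hu2}, while using the already-established Lemma 4 as a black box for the induction equation. Fix $k\in\mathbb{N}$ and $\epsilon>0$. Introduce the closed ball
\begin{eqnarray*}
B_R:=\{\textbf{v}\in\textbf{C}([0,T];\textbf{X}^k):\sup_{t\in[0,T]}\|\textbf{v}(t)\|_{\textbf{L}^2(\Omega)}\leq R\},
\end{eqnarray*}
for $R>0$ sufficiently large (to be fixed by the \emph{a priori} estimates below), and construct a map $\mathcal{T}:B_R\to B_R$ that, given $\textbf{v}\in B_R$, produces successively $\rho^{k,\epsilon}[\textbf{v}]$, $\textbf{H}^{k,\epsilon}[\textbf{v}]$, $\theta^{k,\epsilon}[\textbf{v}]$, and finally an updated velocity $\textbf{u}^{k,\epsilon}=\mathcal{T}(\textbf{v})$ by solving the momentum equation with these coefficients frozen. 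A fixed point of $\mathcal{T}$ will be the desired solution of (\ref{E6-1})--(\ref{E6-4}).

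First I would handle the density step. For $\textbf{v}\in B_R\subset\textbf{C}([0,T];\textbf{X}^k)$ (finite-dimensional, hence smooth in $x$) the parabolic problem
\begin{eqnarray*}
\rho_t+\textbf{div}(\rho\textbf{v})-\epsilon\triangle\rho=0,\quad\nabla\rho\cdot\textbf{n}=0,\quad\rho(0,\cdot)=\rho_0,
\end{eqnarray*}
has a unique strong solution $\rho^{k,\epsilon}[\textbf{v}]$ by standard linear parabolic theory (e.g.\ Ladyzhenskaya--Solonnikov--Ural'ceva). Since $\textbf{div}\textbf{v}\in\textbf{L}^\infty((0,T)\times\Omega)$, the standard comparison/maximum principle gives
\begin{eqnarray*}
0<\rho_*\,e^{-\|\textbf{div}\textbf{v}\|_\infty T}\leq\rho^{k,\epsilon}[\textbf{v}]\leq\rho^*\,e^{\|\textbf{div}\textbf{v}\|_\infty T}<+\infty,
\end{eqnarray*}
and a renormalization argument (along the lines of DiPerna--Lions) actually preserves the bilateral bounds $\rho_*\leq\rho^{k,\epsilon}\leq\rho^*$ because $\textbf{div}\textbf{v}=0$ for $\textbf{v}\in\textbf{X}^k$. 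Next, Lemma 4 produces $\textbf{H}^{k,\epsilon}[\textbf{v}]\in\textbf{C}^1([0,T];\textbf{Y}^k)$ continuously in $\textbf{v}$.

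The thermal step is then an ODE system for the coefficients $b_j^{k,\epsilon}(t)$. After multiplying (\ref{E6-3}) by a right inverse of the mass matrix $(\rho^{k,\epsilon}Q'(\theta^{k,\epsilon})\omega_i,\omega_j)$ (invertible because $\rho^{k,\epsilon}\geq\rho_*$ and $Q'=c_\nu\geq\underline{c_\nu}>0$ by (\ref{E2-1R})), Cauchy--Peano gives a local solution and the energy estimate obtained by testing with $\omega_j=Q(\theta^{k,\epsilon})$, coupled with (\ref{E3-13}) and the positive-sign of the dissipative terms on the right-hand side, yields uniform bounds that extend the solution to $[0,T]$. Positivity $\theta^{k,\epsilon}\geq\theta_*$ (so that $\theta_{\max}^{k,\epsilon}=\theta^{k,\epsilon}$ a posteriori) follows by testing (\ref{E6-3}) against $(\theta^{k,\epsilon})_-^p$ or, equivalently, by a minimum principle on the finite-dimensional ODE system, using that $\theta_0\geq\theta_*>0$ and the right-hand side of (\ref{E6-3}) is non-negative.

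Finally, substituting $\rho^{k,\epsilon}[\textbf{v}]$, $\theta^{k,\epsilon}[\textbf{v}]$ and $\textbf{H}^{k,\epsilon}[\textbf{v}]$ into (\ref{E6-2}) reduces the momentum equation to an ODE system
\begin{eqnarray*}
\mathbb{M}[\textbf{v}](t)\,\dot{\textbf{a}}^{k,\epsilon}(t)=\textbf{F}[\textbf{v}](t,\textbf{a}^{k,\epsilon}(t)),\quad\textbf{a}^{k,\epsilon}(0)=\textbf{a}_0^{k,\epsilon},
\end{eqnarray*}
where the mass matrix $\mathbb{M}[\textbf{v}]_{ij}=(\rho^{k,\epsilon}[\textbf{v}]\psi_i,\psi_j)$ is uniformly positive definite thanks to $\rho^{k,\epsilon}\geq\rho_*$, so Picard yields a local solution. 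The energy identity obtained by testing with $\psi_j=\textbf{u}^{k,\epsilon}$ and summing — exactly mimicking the derivation of (\ref{E3-7}) at the $n$-level — produces a bound on $\sup_t\|\sqrt{\rho^{k,\epsilon}}\textbf{u}^{k,\epsilon}(t)\|_2$ independent of the local existence time, allowing continuation to $[0,T]$ and fixing the radius $R$ so that $\mathcal{T}(B_R)\subset B_R$. The map $\mathcal{T}$ sends $B_R$ into a bounded subset of $\textbf{C}^{0,1/2}([0,T];\textbf{X}^k)$ (by the ODE right-hand side bound), which embeds compactly into $\textbf{C}([0,T];\textbf{X}^k)$, and continuity of $\mathcal{T}$ follows from continuous dependence for each of the three linear/nonlinear subproblems. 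Schauder's theorem then produces a fixed point, which is the desired solution $(\rho^{k,\epsilon},\textbf{u}^{k,\epsilon},\theta^{k,\epsilon},\textbf{H}^{k,\epsilon})$. The main obstacle I anticipate is verifying the continuity of the composite map $\mathcal{T}$ — in particular, propagating the continuous dependence of $\rho^{k,\epsilon}[\textbf{v}]$ (in a strong enough topology to control the nonlinear term $\rho^{k,\epsilon}[\textbf{v}]\textbf{v}\otimes\textbf{v}$) and of $\theta^{k,\epsilon}[\textbf{v}]$ through the truncation $\theta_{\max}^{k,\epsilon}$ appearing in $\textbf{S}_{k,\epsilon}$; the $\epsilon$-regularization of the continuity equation and the finite-dimensionality of $\textbf{X}^k$ and $\textbf{Y}^k$ are precisely what make this step tractable.
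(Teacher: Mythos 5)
Your proposal follows essentially the same route as the paper: the paper establishes Lemma 4 for the induction equation and then simply defers the remaining Faedo--Galerkin/fixed-point construction to Proposition 7.2 of \cite{Fei1} and Lemma 3.2 of \cite{Hu2}, which is precisely the scheme you spell out (solve the $\epsilon$-regularized continuity equation for $\rho^{k,\epsilon}[\textbf{v}]$, invoke Lemma 4 for $\textbf{H}^{k,\epsilon}[\textbf{v}]$, solve the finite-dimensional ODE systems for $\theta^{k,\epsilon}$ and $\textbf{u}^{k,\epsilon}$, and close with a Schauder fixed point). One caveat: your claim that $\theta^{k,\epsilon}\geq\theta_*$ already at the Galerkin level, obtained by testing with $(\theta^{k,\epsilon})_{-}^{p}$ or by a minimum principle, does not go through, since such test functions do not lie in the linear hull of $\{\omega_j\}_{j\leq k}$ and Galerkin truncations do not inherit minimum principles; this is exactly why the approximate system is formulated with the truncation $\theta_{max}^{k,\epsilon}=\max\{\theta^{k,\epsilon},0\}$ in $\textbf{S}_{k,\epsilon}$, and since positivity of the temperature is only recovered after the limit passages, the over-claim is harmless for the existence statement itself.
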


\subsection{Limit $\epsilon\longrightarrow0$}

First, we summarize the estimates available for (\ref{E6-1})-(\ref{E6-4}) for $\epsilon>0$ and $k\in\textbf{N}$ fixed. Then the behavior of relevant solutions will be studied as $\epsilon\longrightarrow0$.

Multiplying (\ref{E6-1}) by $\rho^{k,\epsilon}$ leads to
\begin{eqnarray}\label{E6-5R}
\sup_{t\in[0,T]}\|\rho^{k,\epsilon}\|^2_{\textbf{L}^2(\Omega)}+2\epsilon\int_0^T\|\nabla\rho^{k,\epsilon}\|^2_{\textbf{L}^2(\Omega)}dt\leq\|\rho_0\|^2_{\textbf{L}^2(\Omega)}.
\end{eqnarray}
It follows from (\ref{E1-13}) and a weak maximum (minimum) principle that
\begin{eqnarray}\label{E6-5R1}
\rho_*\leq\rho^{k,\epsilon}\leq\rho^*.
\end{eqnarray}
Taking the $\textbf{L}^2$ scalar product of (\ref{E6-1}) with a smooth $z$ leads to the equation
\begin{eqnarray}\label{E6-5}
\langle\rho^{k,\epsilon}_t,z\rangle-(\rho^{k,\epsilon}\textbf{u}^{k,\epsilon},\nabla z)+\epsilon(\nabla\rho^{k,\epsilon},\nabla z)=0.
\end{eqnarray}
Multiplying the $j$th equation in (\ref{E6-2}) by $a_j^{k,\epsilon}$, then taking the sum over $j=1,\ldots,n$, using (\ref{E6-5}) with $z=\frac{|\textbf{u}^{k,\epsilon}|^2}{2}$ and integrating the equality over $(0,t)$, we have
\begin{eqnarray}\label{E6-7}
\|\textbf{u}^{k,\epsilon}\|_{\textbf{L}^2(\Omega)}^2&+&\|\sqrt{\rho^{k,\epsilon}}\textbf{u}^{k,\epsilon}\|_{\textbf{L}^2(\Omega)}^2
+4\int_0^t(\textbf{S}^{k,\epsilon},\textbf{D}(\textbf{u}^{k,\epsilon}))ds\nonumber\\
&&+\int_0^t\int_{\Omega}(\textbf{H}^{k,\epsilon})^T\nabla\textbf{u}^{k,\epsilon}\textbf{H}^{k,\epsilon}
+\frac{1}{2}\nabla(|\textbf{H}^{k,\epsilon}|^2)\textbf{u}^{k,\epsilon}dxds\nonumber\\
&&\leq2\|\sqrt{\rho_0}\Gamma^n\textbf{u}_0\|_{\textbf{L}^2(\Omega)}^2.
\end{eqnarray}
Multiplying the $j$th equation in (\ref{E6-4}) by $c_j^{k,\epsilon}$, then taking the sum over $j=1,\ldots,n$ and integrating the equality over $(0,t)$, we have
\begin{eqnarray}\label{E6-6}
2\|\textbf{H}^{k,\epsilon}\|^2_{\textbf{L}^2(\Omega)}&+&4\int_0^t\nu\|\nabla\times\textbf{H}^{k,\epsilon}\|_{\textbf{L}^2(\Omega)}^2ds\nonumber\\
&&-4\int_0^t\int_{\Omega}((\textbf{H}^{k,\epsilon})^T\nabla\textbf{u}^{k,\epsilon}\textbf{H}^{k,\epsilon}+\frac{1}{2}\nabla(|\textbf{H}^{k,\epsilon}|^2)\cdot\textbf{u}^{k,\epsilon})dx)ds\nonumber\\
&=&2\|\Gamma^n\textbf{H}_0\|_{\textbf{L}^2(\Omega)}^2.
\end{eqnarray}
Summing up (\ref{E6-7})-(\ref{E6-6}), we obtain
\begin{eqnarray}\label{E6-8}
\|\textbf{u}^{k,\epsilon}\|_{\textbf{L}^2(\Omega)}^2&+&\|\sqrt{\rho^{k,\epsilon}}\textbf{u}^{k,\epsilon}\|_{\textbf{L}^2(\Omega)}^2+2\|\textbf{H}^{k,\epsilon}\|^2_{\textbf{L}^2(\Omega)}\nonumber\\
&+&4\int_0^t\left((\textbf{S}^{k,\epsilon},\textbf{D}(\textbf{u}^{k,\epsilon})+\nu\|\nabla\times\textbf{H}^{k,\epsilon}\|_{\textbf{L}^2(\Omega)}^2\right)ds\nonumber\\
&\leq&2\|\sqrt{\rho_0}\Gamma^n\textbf{u}_0\|_{\textbf{L}^2(\Omega)}^2+2\|\textbf{H}_0\|_{\textbf{L}^2(\Omega)}^2.
\end{eqnarray}
Using Korn's inequality to the fourth term we get
\begin{eqnarray}\label{E6-9}
\sup_{t\in[0,T]}(\|\textbf{u}^{k,\epsilon}\|_{\textbf{L}^2(\Omega)}^2+\|\textbf{H}^{k,\epsilon}\|^2_{\textbf{L}^2(\Omega)})+\int_0^T(\|\textbf{u}^{k,\epsilon}\|_{\textbf{W}^{1,r}(\Omega)}^r&+&\|\nabla\times\textbf{H}^{k,\epsilon}\|^2_{\textbf{L}^2(\Omega)})dt\nonumber\\
&&\leq C.
\end{eqnarray}
Multiplying the $j$th equation in (\ref{E6-3}) by $b_j^{k,\epsilon}$, then taking the sum over $j=1,\ldots,n$, using (\ref{E6-5}) with $z=\frac{|\theta^{k,\epsilon}|^2}{2}$ and integrating the equality over $(0,t)$, we have
\begin{eqnarray}\label{E6-10}
\sup_{t\in[0,T]}\{\|\theta^{k,\epsilon}\|_{\textbf{L}^2(\Omega)}^2&+&\|\sqrt{\rho^{k,\epsilon}}\theta^{k,\epsilon}\|_{\textbf{L}^2(\Omega)}^2\}
+\int_0^T\|\sqrt{\kappa^{k,\epsilon}}\nabla\theta^{k,\epsilon}\|^2_{\textbf{L}^2(\Omega)}dt\nonumber\\
&\leq&C\|\sqrt{\rho_0}\theta_0^n\|_{\textbf{L}^2(\Omega)}^2+C\int_0^T\|\textbf{S}^{k,\epsilon}\cdot\textbf{D}(\textbf{u}^{k,\epsilon})\|^2_{\textbf{L}^2(\Omega)}dt\nonumber\\
&&+C\int_0^T\|\nabla\times\textbf{H}^{k,\epsilon}\|^2_{\textbf{L}^2(\Omega)}dt\leq C(n).
\end{eqnarray}
For fixed $k\in\textbf{N}$ we can multiply the $j$th equation in (\ref{E6-2}) by $\frac{da_j^{k,\epsilon}}{dt}$, the $j$th equation in (\ref{E6-3}) by $\frac{db_j^{k,\epsilon}}{dt}$ and the $j$th equation in (\ref{E6-4}) by $\frac{dc_j^{k,\epsilon}}{dt}$. Then we obtain
\begin{eqnarray}\label{E6-11}
\int_0^T|\frac{da_j^{k,\epsilon}}{dt}|^2dt,~~\int_0^T|\frac{db_j^{k,\epsilon}}{dt}|^2dt,~~\int_0^T|\frac{dc_j^{k,\epsilon}}{dt}|^2dt\leq C(n).
\end{eqnarray}
By (\ref{E6-5R})-(\ref{E6-10}), we can have the following convergence result as $\epsilon\longrightarrow0$
\begin{eqnarray}\label{E6-12}
\rho^{k,\epsilon}\longrightarrow\rho^k~~*-weakly~in~\textbf{L}^{\infty}([0,T]\times\Omega),
\end{eqnarray}
\begin{eqnarray}\label{E6-13}
a^{k,\epsilon}\rightharpoonup a^k~~weakly~in~\textbf{W}^{1,2}(0,T)~and~strongly~in~\mathcal{C}([0,T]),
\end{eqnarray}
\begin{eqnarray}\label{E6-14}
b^{k,\epsilon}\rightharpoonup b^k~~weakly~in~\textbf{W}^{1,2}(0,T)~and~strongly~in~\mathcal{C}([0,T]),
\end{eqnarray}
\begin{eqnarray}\label{E6-15}
c^{k,\epsilon}\rightharpoonup c^k~~weakly~in~\textbf{W}^{1,2}(0,T)~and~strongly~in~\mathcal{C}([0,T]).
\end{eqnarray}
It follows from (\ref{E6-12}) and (\ref{E6-15}) that
\begin{eqnarray}\label{E6-16}
\textbf{u}^{k,\epsilon}\longrightarrow\textbf{u}^k~~strongly~in~\textbf{L}^{2r}(0,T;\textbf{W}^{1,2r}_n(\Omega)),
\end{eqnarray}
\begin{eqnarray}\label{E6-17}
\textbf{H}^{k,\epsilon}\longrightarrow\textbf{H}^k~~strongly~in~\textbf{L}^{2r}(0,T;\textbf{W}^{1,2r}_n(\Omega)).
\end{eqnarray}
By (\ref{E6-12}), (\ref{E6-16})-(\ref{E6-17}), we can take the limit in the weak formulation of (\ref{E6-1}) and get the transport equation
\begin{eqnarray*}
\rho^{k}_t+\textbf{div}(\rho^{k}\textbf{u}^{k})=0,~~on~~[0,T]\times\Omega.
\end{eqnarray*}
Then applying Diperna-Lions theory (see \cite{Diperna,Lions0}) of the renormalized solutions to the transport equation, we conclude that
\begin{eqnarray*}
\rho^{k,\epsilon}\longrightarrow\rho^k~~strongly~in~\textbf{L}^{2}([0,T];\textbf{L}^2(\Omega))~~a.e.~in~(0,T)\times\Omega.
\end{eqnarray*}
Thus by the above convergence results, we can take the limit $\epsilon\longrightarrow0$ and obtain the solution $(\rho^k,\textbf{u}^k,\theta^k,\textbf{H}^k)$ solving the following system
\begin{eqnarray}\label{E6-1R1}
\rho^{k}_t+\textbf{div}(\rho^{k}\textbf{u}^{k})=0,~~on~~[0,T]\times\Omega,
\end{eqnarray}
\begin{eqnarray}\label{E6-2R1}
(\rho^{k}\frac{d}{dt}\textbf{u}^{k},\psi_j)+(\rho^{k}[\textbf{u}^{k}]\textbf{u}^{k},\psi_j)+(\textbf{S}_{k},\textbf{D}(\psi_j))
=((\nabla\times\textbf{H}^{k})\times\textbf{H}^{k},\psi_j),~~~
\end{eqnarray}
\begin{eqnarray}\label{E6-3R1}
(\rho^{k}\frac{d}{dt}Q(\theta^{k}),\omega_j)+(\rho^{k}Q(\theta^{k})\textbf{u}^{k},\omega_j)&+&(\textbf{q}(\rho_{k},\theta^{k},\nabla\theta^{k}),\nabla \omega_j))\nonumber\\
&=&(|\nabla\times\textbf{H}^{k}|^2,\omega_j)+(\textbf{S}_{k},\textbf{D}(\textbf{u}^{k})\omega_j),~~~~~
\end{eqnarray}
\begin{eqnarray}\label{E6-4R1}
(\frac{d}{dt}\textbf{H}^{k},\varpi_j)+\nu(\textbf{curl}\textbf{H}^{k},\textbf{curl}\varpi_j)
-(\textbf{u}^{k}\times\textbf{H}^{k},\textbf{curl}\varpi_j)=0,~~~
\end{eqnarray}
with
\begin{eqnarray*}
&&\rho^{k}\cdot\textbf{n}=0~~on~~[0,T],~~\rho_*\leq\rho^{k}\leq\rho^*,~~\rho^{k}(0,\cdot)=\rho_0~~in~~(0,T)\times\Omega,\\
&&\textbf{u}^{k}(0,\cdot)=\Gamma^n\textbf{u}_0,~~\theta^{k}(0,\cdot)=\Gamma^k\theta_0,
~~\textbf{H}^{k}(0,\cdot)=\Gamma^k\textbf{H}_0,
\end{eqnarray*}
where
\begin{eqnarray*}
\textbf{S}_{k}:=\textbf{S}(\rho^{k},\theta_{max}^{k},\textbf{D}^{k})~~with~~\theta_{max}^{k}:=\max\{\theta^{k},0\}.
\end{eqnarray*}

\subsection{Limit $k\longrightarrow\infty$}In this subsection, we establish some uniform estimates with respect to $k$. We follow the method in \cite{B3,Freh2}.
Using the similar procedures of (\ref{E6-9}), (\ref{E6-10}), (\ref{E6-11}) and (\ref{E6-1R1}), we have
\begin{eqnarray}\label{E6-18}
\rho_*\leq\rho^{k}\leq\rho^*,~~\int_0^T\|\rho_t^{k}\|^{\frac{q}{q-1}}_{(\textbf{W}^{1,q}(\Omega))^*}dt\leq C~~\forall q\in(1,\infty),
\end{eqnarray}
\begin{eqnarray}\label{E6-18R1}
\sup_{t\in[0,T]}\{\|\textbf{u}^{k}\|_{\textbf{L}^2(\Omega)}^2&+&\|\textbf{H}^{k}\|^2_{\textbf{L}^2(\Omega)}\}\nonumber\\
&&+\int_0^T(\|\textbf{u}^{k}\|_{\textbf{W}^{1,r}(\Omega)}^r+\|\nabla\times\textbf{H}^{k}\|^2_{\textbf{L}^2(\Omega)})dt\leq C,
\end{eqnarray}
\begin{eqnarray}\label{E6-19}
\sup_{t\in[0,T]}\{\|\theta^{k}\|_{\textbf{L}^2(\Omega)}^2+\|\sqrt{\rho^{k}}\theta^{k}\|_{\textbf{L}^2(\Omega)}^2\}
+\int_0^T\|\sqrt{\kappa^{k}}\nabla\theta^{k}\|^2_{\textbf{L}^2(\Omega)}dt\leq C(n),~~~~~~~
\end{eqnarray}
\begin{eqnarray}\label{E6-18R2}
\int_0^T|\frac{da_j^{k}}{dt}|^2dt,~~\int_0^T|\frac{db_j^{k}}{dt}|^2dt,~~\int_0^T|\frac{dc_j^{k}}{dt}|^2dt\leq C(n).
\end{eqnarray}
Using Lions-Diperna theory of renormalized solutions, by (\ref{E6-18}), we have
\begin{eqnarray}\label{E6-19R}
\rho^k\longrightarrow\rho~~strongly~~in~~\textbf{C}([0,T];\textbf{L}^p(\Omega))~~a.e.~in~(0,T)\times\Omega.
\end{eqnarray}
By (\ref{E6-18R1}) and (\ref{E6-18R2}), we get
\begin{eqnarray*}
\textbf{u}^k\longrightarrow\textbf{u}~~strongly~~in~~\textbf{L}^{2r}(0,T;\textbf{W}_n^{1,2r}(\Omega)),
\end{eqnarray*}
and
\begin{eqnarray*}
\textbf{H}^k\longrightarrow\textbf{H}~~strongly~~in~~\textbf{L}^{2}(0,T;\textbf{W}_n^{1,2}(\Omega)).
\end{eqnarray*}
It follows from (\ref{E6-18})-(\ref{E6-19}) that
\begin{eqnarray}\label{E6-20}
\sup_{t\in[0,T]}\{\|(\rho^k\theta^{k})(t)\|_{\textbf{L}^2(\Omega)}^2\}&+&\int_{Q}|\nabla\theta^k|^2dxds\nonumber\\
&&+\int_{((0,T)\times\Omega)/Q}(\theta^k)^{\alpha}|\nabla\theta^k|^2dxds\leq C(n),~~
\end{eqnarray}
where $Q=\{(t,x)\in(0,T)\times\Omega;\theta^k(t,x)\leq\theta_*\}$.

Define
\begin{eqnarray*}
\bar{\kappa}(\theta):=\left\{
\begin{array}{lll}
&&\theta^{\alpha}~~for~\theta\geq\theta_*,\\
&&\theta_*^{\alpha}~~for~\theta<\theta_*,
\end{array}
\right.
\end{eqnarray*}
and
\begin{eqnarray*}
\overline{K}(\theta):=\left\{
\begin{array}{lll}
&&\frac{2}{\alpha+2}\theta^{\frac{\alpha+2}{2}}+\frac{\alpha}{\alpha+2}\theta_*^{\frac{\alpha+2}{2}}~~for~\theta\geq\theta_*,\\
&&\theta_*^{\frac{\alpha}{2}}\theta~~for~\theta\leq\theta_*.
\end{array}
\right.
\end{eqnarray*}
Using (\ref{E6-20}), we have
\begin{eqnarray}\label{E6-21}
\sup_{t\in(0,T)}\|\theta^k\|^2_{\textbf{L}^2(\Omega)}+\int_0^T|\nabla\overline{K}(\theta^k)|^2dt\leq C(n).
\end{eqnarray}
Using the similar procedure in \cite{B3} (also see \cite{Freh2}), by (\ref{E6-21}), we get
\begin{eqnarray}\label{E6-21R1}
\sup_{t\in(0,T)}\|\overline{K}(\theta^k)\|^2_{\textbf{L}^2(\Omega)}+\int_0^T\|\overline{K}(\theta^k)\|^2_{\textbf{W}^{1,2}(\Omega)}dt\leq C(n),
\end{eqnarray}
\begin{eqnarray*}
\int_0^T\|\textbf{q}^k\|^m_{\textbf{L}^m(\Omega)}dt\leq C(n)~~with~m=2~for~\alpha\leq0~~and~~m=\frac{3\alpha+10}{3\alpha+5}~for~\alpha>0,
\end{eqnarray*}
\begin{eqnarray}\label{E6-21R2}
\|(\bar{\kappa}(\theta^k))^{-\frac{1}{2}}\kappa_k\|_{\textbf{L}^q((0,T)\times\Omega)}\leq C(n)&&with~q=\infty~for~\alpha\leq0\nonumber\\
&&and~~q=\frac{2(3\alpha+10)}{3\alpha}~for~\alpha>0,~~~~
\end{eqnarray}
\begin{eqnarray*}
\int_0^T|\nabla\theta^k|^qdxdt\leq C(n)~~with~q=\frac{5(\alpha+2)}{\alpha+5}~for~\alpha\leq0~~and~~q=2~for~\alpha>0.
\end{eqnarray*}
Then it follows from the above estimates, (\ref{E6-3R1}) and the continuity of the projection $\Gamma^k$ that
\begin{eqnarray}\label{E6-22}
\|\partial_t(\rho^k\theta^k)\|_{\textbf{L}^{q'}(0,T;\textbf{W}^{1,\delta'}(\Omega))}\leq C(n)~~for~~\delta=\{2,\frac{3\alpha+10}{3\alpha+5}\}.
\end{eqnarray}
By (\ref{E6-19})-(\ref{E6-22}), we can establish the following convergence results as $k\longrightarrow\infty$
\begin{eqnarray}\label{E6-22R}
\theta^k\rightharpoonup\theta~~weakly~in~\textbf{L}^q(0,T;\textbf{W}^{1,q}(\Omega)),
\end{eqnarray}
\begin{eqnarray*}
\rho^k\theta^k\rightharpoonup\rho\theta~~*-weakly~in~\{z\in\textbf{L}^{\infty}(0,T;\textbf{L}^{2}(\Omega)),z_t\in\textbf{L}^{\delta'}(0,T;\textbf{W}^{-1,\delta'}(\Omega))\}.
\end{eqnarray*}
Consequently, using Aubin-Lions Lemma 1, we have
\begin{eqnarray*}
\rho^k\theta^k\longrightarrow\rho\theta~~strongly~in~\textbf{C}(0,T;(\textbf{W}^{1,q}(\Omega))^*).
\end{eqnarray*}
Then
\begin{eqnarray*}
\lim_{k\longrightarrow\infty}\int_0^T(\rho^k\theta^k,\theta^k)dt&=&\lim_{k\longrightarrow\infty}\int_0^T\langle\rho^k\theta^k,\theta^k\rangle_{(\textbf{W}^{1,q}(\Omega))^*} dt\nonumber\\
&=&\int_0^T\langle\rho\theta,\theta\rangle_{(\textbf{W}^{1,q}(\Omega))^*}=\int_0^T(\rho\theta,\theta)dt,
\end{eqnarray*}
which together with (\ref{E6-22R}) shows that
\begin{eqnarray}\label{E6-23}
\theta^k\longrightarrow\theta~~strongly~in~\textbf{L}^2((0,T)\times\Omega)~~a.e.~in~(0,T)\times\Omega,
\end{eqnarray}
This combining with (\ref{E6-19R}), (\ref{E6-21R1})-(\ref{E6-21R2}) implies that
\begin{eqnarray*}
\overline{K}(\theta^k)\rightharpoonup\overline{K}(\theta)~~weakly~~in~~\textbf{L}^2(0,T;\textbf{W}^{1,2}(\Omega)),
\end{eqnarray*}
\begin{eqnarray*}
(\bar{\kappa}(\theta^k))^{-\frac{1}{2}}\kappa_k\longrightarrow(\bar{\kappa}(\theta))^{-\frac{1}{2}}\kappa~~strongly~~in~~\textbf{L}^{q^*}(0,T;\textbf{L}^{q^*}(\Omega))~~\forall~q^*<q,
\end{eqnarray*}
and hence
\begin{eqnarray*}
\textbf{q}_k\rightharpoonup\textbf{q}:=\bar{\kappa}(\rho,\theta)\nabla\theta~~weakly~~in~~\textbf{L}^{\delta}(0,T;\textbf{W}^{1,\delta}(\Omega)),
\end{eqnarray*}
where $\bar{\kappa}:=\kappa(\rho,\theta_{max})$ with $\theta_{max}:=\{\theta,\theta_*\}$.

Therefore, the above convergence results allow us to take the limit in (\ref{E6-1R1})-(\ref{E6-4R1}) and to obtain
\begin{eqnarray*}
\int_{0}^T\langle\rho_t,z\rangle-(\rho\textbf{u},\nabla z)dt=0,~~\rho_*\leq\rho\leq\rho^*,
\end{eqnarray*}
for any $z\in\textbf{L}^{q}(0,T;\textbf{W}^{1,s}(\Omega))$ for any $q\in[1,\infty)$.
\begin{eqnarray*}
\langle(\rho\textbf{u})_t,\varphi_j\rangle-(\rho\textbf{u}\otimes\textbf{u},\nabla\varphi_j)&+&(\textbf{S}(\rho,\theta,\textbf{D}(\textbf{u})),\textbf{D}(\varphi_j))\nonumber\\
&&+\int_{\Omega}\textbf{H}^T\nabla\varphi_j\textbf{H}+\frac{1}{2}\nabla(|\textbf{H}|^2)\cdot\varphi_jdx=0,
\end{eqnarray*}
\begin{eqnarray*}
\int_0^T(\langle(\rho Q(\theta))_t,h\rangle&-&(\rho Q(\theta)\textbf{u},\nabla h)-(\textbf{q}(\rho,\theta,\nabla\theta),\nabla h))dt\nonumber\\
&=&\int_0^T\left(\nu(|\nabla\times\textbf{H}|^2,h)+(\textbf{S}(\rho,\theta,\textbf{D}(\textbf{u})),\textbf{D}(\textbf{u})h)\right)dt,
\end{eqnarray*}
for all $h\in\textbf{L}^{\infty}(0,T;\textbf{W}^{1,q})$ with $q$ sufficiently large,
\begin{eqnarray*}
\int_0^T\left(\langle\textbf{H}_t,b\rangle+\nu(\textbf{curl}\textbf{H},\textbf{curl}b)
+(\textbf{u}\times\textbf{H},\textbf{curl}b)\right)dt=0,
\end{eqnarray*}
for all $b\in\textbf{L}^{2}(0,T;\textbf{W}^{1,2}(\Omega))$.

\begin{acknowledgements}
This work was done when the author visited University Kansas in 2011.
The author is grateful for the hospitality of the department of mathematics, University of Kansas,
and express his sincerely thanks to prof Weishi. Liu for his help!
This work is supported by NSFC No 11201172, SRFDP Grant No 20120061120002 and the 985 Project of Jilin University.

\end{acknowledgements}



\end{document}